\newtheorem{theorem}{Theorem}[section]
\newtheorem{lemma}[theorem]{Lemma}
\newtheorem{conjecture}[theorem]{Conjecture}
\newtheorem{question}[theorem]{Question}
\theoremstyle{definition}
\newtheorem*{remark}{Remark}
\DeclareMathOperator{\tr}{tr}
\DeclareMathOperator{\pr}{Pr}
\newcommand{\ab}[1]{\lvert #1 \rvert}
\newcommand{\E}{\mathbb{E}}
\crefname{lemma}{Lemma}{Lemmas}
\newlist{thmenum}{enumerate}{1}
\setlist[thmenum]{label=(\roman*), ref=\thetheorem(\roman*)}
\newlist{lemenum}{enumerate}{1}
\setlist[lemenum]{label=(\roman*), ref=\thetheorem(\roman*)}
\title{The inertia bound is far from tight}
\author{Matthew Kwan\thanks{Institute of Science and Technology Austria (ISTA), 3400 Klosterneuburg, Austria. Email address:  \texttt{matthew.kwan@ist.ac.at}. Supported by ERC Starting Grant ``RANDSTRUCT'' No.\ 101076777.} \and Yuval Wigderson\thanks{Institute for Theoretical Studies, ETH Z\"urich, 8092 Z\"urich, Switzerland. Email address: \texttt{yuval.wigderson@eth-its.ethz.ch}. Supported by Dr.\ Max R\"ossler, the Walter Haefner Foundation, and the ETH Z\"urich Foundation.}}
\date{}
\begin{document}

\maketitle

\begin{abstract}
The {inertia bound} and {ratio bound} (also known as the {Cvetkovi\'c bound} and {Hoffman bound}) are two fundamental inequalities in spectral graph theory, giving upper bounds on the independence number $\alpha(G)$ of a graph $G$ in terms of spectral information about a weighted adjacency matrix of $G$. For both inequalities, given a graph $G$, one needs to make a judicious choice of weighted adjacency matrix to obtain as strong a bound as possible.

While there is a well-established theory surrounding the ratio bound, the inertia bound is much more mysterious, and its limits are rather unclear. In fact, only recently did Sinkovic find the first example of a graph for which the inertia bound is not tight (for any weighted adjacency matrix), answering a longstanding question of Godsil. We show that the inertia bound can be extremely far from tight, and in fact can significantly underperform the ratio bound: for example, one of our results is that for infinitely many $n$, there is an $n$-vertex graph for which even the unweighted ratio bound can prove $\alpha(G)\le 4n^{3/4}$, but the inertia bound is always at least $n/4$. In particular, these results address questions of Rooney, Sinkovic, and Wocjan--Elphick--Abiad.
\end{abstract}

\section{Introduction}
Spectral graph theory contains a wide array of deep and surprising results which relate certain combinatorial graph parameters to linear-algebraic parameters of associated matrices. Of particular importance are those results which bound the independence number $\alpha(G)$ of a graph $G$ in terms of its spectrum, as such results have many applications in other areas of combinatorics
(see e.g.\ the monographs \cite{GoMe,GR01,BH12,BCN89}).

Probably the most famous such result is the \emph{ratio bound} (also known as the \emph{Hoffman bound}). To state it, we need some notation. If $G$ is a graph with vertex set $\{1,\dots,n\}$, we say that $A$ is a \emph{weighted adjacency matrix} of $G$ if $A_{ij}=0$ whenever $ij$ is not an edge of $G$. In other words, we can obtain $A$ by starting with the adjacency matrix of $G$, and replacing every $1$ by an arbitrary real number (including zero and negative numbers), while maintaining symmetry of the matrix.
\begin{theorem}[Ratio bound]\label{thm:ratio}
    Let $G$ be a graph and let $A$ be a weighted adjacency matrix of $G$ with equal row sums. Let $\lambda_{\mathrm{min}}$ and $\lambda_{\mathrm{max}}$ be the minimum and maximum eigenvalues of $A$, respectively. Then
    \[
    \alpha(G) \leq \left|\frac{\lambda_{\mathrm{min}}}{\lambda_{\mathrm{max}}-\lambda_{\mathrm{min}}}\right|n.
    \]
\end{theorem}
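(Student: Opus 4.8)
The plan is to feed the indicator vector of a maximum independent set into the quadratic form of $A$ and control the outcome with the extreme eigenvalues. Let $S$ be a maximum independent set, so $\ab{S}=\alpha:=\alpha(G)$, and let $\mathbf 1_S\in\{0,1\}^n$ be its indicator vector. The structural fact that drives the whole argument is
\[
\mathbf 1_S^{\top}A\,\mathbf 1_S=\sum_{i,j\in S}A_{ij}=0:
\]
for distinct $i,j\in S$ this holds because $S$ is independent, so $ij\notin E(G)$ and $A_{ij}=0$; and for $i=j$ it holds because a weighted adjacency matrix has zero diagonal (a loop $ii$ is never an edge).

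Next I would diagonalise $A$. Since $A$ is real symmetric, fix an orthonormal eigenbasis $\mathbf v_1,\dots,\mathbf v_n$ with $A\mathbf v_k=\mu_k\mathbf v_k$ and $\mu_1\ge\cdots\ge\mu_n$, so $\mu_1=\lambda_{\mathrm{max}}$ and $\mu_n=\lambda_{\mathrm{min}}$. The hypothesis that $A$ has equal row sums says precisely that $\mathbf 1$ is an eigenvector of $A$; in the relevant setting its eigenvalue is the largest one (this is automatic when $A$ is the adjacency matrix of a regular graph, and in general is guaranteed by Perron--Frobenius when $A$ is entrywise nonnegative and $G$ is connected), so we may take $\mathbf v_1=\mathbf 1/\sqrt n$. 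Expanding $\mathbf 1_S=\sum_k c_k\mathbf v_k$ then gives $c_1=\mathbf 1_S^{\top}\mathbf 1/\sqrt n=\alpha/\sqrt n$ and $\sum_k c_k^2=\|\mathbf 1_S\|^2=\alpha$, whence $\sum_{k\ge 2}c_k^2=\alpha-\alpha^2/n$.

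Now I would combine these ingredients. Using $\mathbf 1_S^{\top}A\,\mathbf 1_S=0$, peeling off the $k=1$ term, and bounding each remaining eigenvalue below by $\lambda_{\mathrm{min}}$,
\[
0=\sum_k\mu_kc_k^2=\lambda_{\mathrm{max}}\,\frac{\alpha^2}{n}+\sum_{k\ge 2}\mu_kc_k^2\ \ge\ \lambda_{\mathrm{max}}\,\frac{\alpha^2}{n}+\lambda_{\mathrm{min}}\Bigl(\alpha-\frac{\alpha^2}{n}\Bigr).
\]
Dividing by $\alpha>0$ and rearranging yields $(\lambda_{\mathrm{max}}-\lambda_{\mathrm{min}})\,\alpha/n\le-\lambda_{\mathrm{min}}$. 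Unless $A=0$ --- in which case $G$ is edgeless, $\alpha=n$, and the stated bound is the vacuous $0/0$ --- the identity $\sum_k\mu_k=\tr A=0$ (the diagonal of $A$ vanishes) forces $\lambda_{\mathrm{min}}<0<\lambda_{\mathrm{max}}$; hence $\lambda_{\mathrm{max}}-\lambda_{\mathrm{min}}>0$ and $-\lambda_{\mathrm{min}}=\ab{\lambda_{\mathrm{min}}}$, and dividing once more gives $\alpha\le\ab{\frac{\lambda_{\mathrm{min}}}{\lambda_{\mathrm{max}}-\lambda_{\mathrm{min}}}}n$, which is the claim.

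I do not anticipate a serious obstacle: the proof is a single quadratic-form identity followed by Rayleigh-quotient estimates. The only point where care is genuinely needed is placing $\mathbf 1$ in the top eigenspace --- if one knows only that $\mathbf 1$ is \emph{some} eigenvector, with eigenvalue $r$, the argument above produces the weaker bound $\alpha/n\le\ab{\lambda_{\mathrm{min}}}/(r-\lambda_{\mathrm{min}})$, and it is exactly the fact that $r=\lambda_{\mathrm{max}}$ (the content of choosing $A$ judiciously) that upgrades this to the stated inequality. An essentially equivalent alternative is the eigenvalue-interlacing proof: the $2\times 2$ quotient matrix of $A$ for the partition $(S,V\setminus S)$ has, using that $S$ is independent and that the common row sum equals $\lambda_{\mathrm{max}}$, eigenvalues $\lambda_{\mathrm{max}}$ and $-\alpha\lambda_{\mathrm{max}}/(n-\alpha)$, and Haemers' interlacing inequality $\lambda_{\mathrm{min}}\le-\alpha\lambda_{\mathrm{max}}/(n-\alpha)$ rearranges to the same conclusion.
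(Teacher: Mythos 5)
The paper does not supply a proof of \cref{thm:ratio}: it is stated as a classical result (credited to Hoffman, unpublished), so there is no in-paper argument to compare against. Your proposal reproduces the standard Rayleigh-quotient proof and the reasoning is locally sound, so let me focus on the one genuine gap.

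The step that needs scrutiny is the claim that $\mathbf 1$ lies in the $\lambda_{\mathrm{max}}$-eigenspace. You justify this via Perron--Frobenius, ``when $A$ is entrywise nonnegative and $G$ is connected.'' But the theorem is stated for an arbitrary weighted adjacency matrix, and the paper's definition explicitly allows negative entries; nothing forces $A\ge 0$ or $G$ connected. Without that, constant row sum $r$ only tells you that $r$ is \emph{some} eigenvalue of $A$, and, as you yourself note, the quadratic-form computation then yields only
\[
\frac{\alpha}{n}\ \le\ \frac{-\lambda_{\mathrm{min}}}{\,r-\lambda_{\mathrm{min}}\,}
\qquad(\text{valid when }r>\lambda_{\mathrm{min}}),
\]
which is weaker than the paper's bound precisely when $r<\lambda_{\mathrm{max}}$. (The alternative upper Rayleigh estimate gives $\alpha/n\le \lambda_{\mathrm{max}}/(\lambda_{\mathrm{max}}-r)$, and one can check that in the range $\lambda_{\mathrm{min}}<r<\lambda_{\mathrm{max}}$ neither of these two bounds, nor their minimum, is at most $-\lambda_{\mathrm{min}}/(\lambda_{\mathrm{max}}-\lambda_{\mathrm{min}})$.) So the sentence ``the argument above produces the weaker bound\ldots\ and it is exactly the fact that $r=\lambda_{\mathrm{max}}$\ldots\ that upgrades this'' is an accurate diagnosis of where your proof stops short, but it is not a fix: you have identified the missing hypothesis rather than discharged it. The form you actually prove --- $\alpha\le \frac{-\lambda_{\mathrm{min}}}{r-\lambda_{\mathrm{min}}}n$ with $r$ the common row sum --- is the version appearing in, e.g., Godsil--Meagher; the paper's wording with $\lambda_{\mathrm{max}}$ in place of $r$ is either shorthand for the case $r=\lambda_{\mathrm{max}}$ (which holds in every application made of it in the paper, where $A$ is the ordinary adjacency matrix of a regular graph), or implicitly assumes one replaces $A$ by $-A$ and relabels so that $\mathbf 1$ is the top eigenvector.

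Two smaller remarks. The edge case $A=0$ is handled correctly, but your parenthetical ``the stated bound is the vacuous $0/0$'' should be made precise: one should simply exclude $A=0$, since otherwise the ratio is undefined rather than vacuously true. Your closing paragraph correctly notes that the Haemers interlacing proof is equivalent; that route carries the identical assumption (the quotient matrix has $\lambda_{\mathrm{max}}$ as its row sum only if $r=\lambda_{\mathrm{max}}$), so it does not repair the gap either.
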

The ratio bound was first proved by Hoffman (unpublished), who only stated it in the case that $A$ is the (ordinary) adjacency matrix of $G$. Even this result is surprisingly powerful; for example, it can be used to give a short proof of the Erd\H os--Ko--Rado theorem~\cite{EKR} (see e.g.\ \cite[Section~2.5]{GoMe} for details). However, the flexibility of choosing an arbitrary weighted adjacency matrix makes the ratio bound much more powerful (see e.g.\ \cite{Wil84}).
Determining the optimal weights to use for the ratio bound can be formulated as a semidefinite program, and the rich theory of semidefinite optimization can therefore be used to theoretically and computationally determine the optimal choice of weights for any given graph.

In this paper we will mostly be concerned with a closely related, yet much more mysterious, bound, known as the \emph{inertia bound} (or the \emph{Cvetkovi\'c bound}). For a symmetric $n \times n$ matrix $A$, we denote by $n_{\ge 0}(A)$ the number of non-negative eigenvalues of $A$. 
\begin{theorem}[Inertia bound]\label{thm:inertia}
    Let $G$ be a graph and let $A$ be a weighted adjacency matrix of $G$. Then\footnote{In the literature, the inertia bound is often stated as $\alpha(G) \leq \min \{n-n_{>0}(A), n-n_{<0}(A)\}$, but it is easy to see that this statement is equivalent to \cref{thm:inertia}, by replacing $A$ with $-A$ if it has more negative than positive eigenvalues.}
    \[
    \alpha(G) \leq n_{\geq0}(A).
    \]
\end{theorem}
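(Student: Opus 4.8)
The plan is to exploit the simple observation that an independent set corresponds to a zero principal submatrix of any weighted adjacency matrix, and to combine this with eigenvalue interlacing. Let $S \subseteq V(G)$ be an independent set with $\ab{S} = \alpha(G)$, and let $B = A[S]$ denote the principal submatrix of $A$ with rows and columns indexed by $S$. Since $S$ is independent, $G$ has no edges inside $S$, so by the definition of a weighted adjacency matrix \emph{every} entry of $B$ vanishes; in particular, all $\alpha(G)$ eigenvalues of $B$ are equal to $0$.

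Next I would invoke Cauchy's interlacing theorem: if $A$ is a symmetric $n \times n$ matrix with eigenvalues $\lambda_1 \ge \cdots \ge \lambda_n$ and $B$ is an $m \times m$ principal submatrix with eigenvalues $\mu_1 \ge \cdots \ge \mu_m$, then $\lambda_i \ge \mu_i$ for every $i \le m$. Applying this with $m = \alpha(G)$ and using $\mu_1 = \cdots = \mu_m = 0$ yields $\lambda_i \ge 0$ for all $i \le \alpha(G)$; that is, $A$ has at least $\alpha(G)$ non-negative eigenvalues, which is precisely the claim $\alpha(G) \le n_{\ge 0}(A)$.

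Alternatively — and this is essentially just the proof of the relevant half of interlacing, so it keeps the argument self-contained — one can argue by dimension counting. The quadratic form $x \mapsto x^{\top} A x$ vanishes identically on the coordinate subspace $U = \operatorname{span}\{e_i : i \in S\}$, which has dimension $\alpha(G)$. If $A$ had more than $n - \alpha(G)$ negative eigenvalues, then the span $W$ of the corresponding eigenvectors would satisfy $\dim U + \dim W > n$, so $U \cap W$ would contain a nonzero vector $x$; but then $x^{\top} A x = 0$ (since $x \in U$) while $x^{\top} A x < 0$ (since $x \in W$), a contradiction. Hence $A$ has at most $n - \alpha(G)$ negative eigenvalues, so $n_{\ge 0}(A) = n - n_{<0}(A) \ge \alpha(G)$.

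There is no real obstacle here: the result is classical and the argument is short. The only points needing care are (a) recording that the submatrix on an independent set is the \emph{zero} matrix, not merely a matrix with zero diagonal, so that one genuinely gets $\alpha(G)$ zero eigenvalues to interlace against; and (b) keeping track of which direction of interlacing is used, and of the distinction between $n_{\ge 0}$ and $n_{>0}$ — although, as the footnote observes, passing from $A$ to $-A$ shows the various formulations are interchangeable.
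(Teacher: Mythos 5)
Your argument is correct and is exactly the one the paper alludes to: right after stating \cref{lem:monotone} (the Cauchy interlacing consequence), the authors remark that it immediately gives the inertia bound when applied to the zero principal submatrix indexed by an independent set. Your alternative dimension-counting paragraph is just an inlined proof of that same interlacing fact, so both versions match the paper's intended route.
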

In many ways, the story of the inertia bound parallels that of the ratio bound. It was first proved by Cvetkovi\'c~\cite{Cve71}, who stated it only for adjacency matrices. This already has a number of interesting applications, including another short proof of the Erd\H os--Ko--Rado theorem (see e.g.\ \cite[Section~2.10]{GoMe} for details), but the flexibility of general weighted adjacency matrices makes the bound much more powerful. The general statement for arbitrary weighted adjacency matrices was perhaps first noted by Calderbank and Frankl \cite{CaFr}, who used it to prove several results in extremal set theory.

Compared to the ratio bound, we know very little about optimal weight matrices for the inertia bound: the search space is infinite, and it is unclear how to minimize $n_{\geq 0}$ over this search space (though see \cite{Roo14} for some heuristics). In many specific applications, such as recent work of Huang--Klurman--Pohoata \cite{HKP20} and the Calderbank--Frankl result mentioned above \cite{CaFr}, the choice of weights can be guided by the symmetries inherent in the problem. Specifically, in such applications, one can restrict to a natural subset of the space of all weighted adjacency matrices, called the \emph{Bose--Mesner algebra}, 
and apply representation-theoretic techniques to understand the structure of this algebra (see e.g.\ \cite{GoMe,BCN89} for introductions to this theory).
Similarly, Huang's breakthrough resolution of the sensitivity conjecture \cite{Hua19} used ideas closely related to the inertia bound, and the choice of weights comes from the inherent symmetries of the hypercube (see \cite{Bis19,AlZh,HLL,Mat22,Tao19,Ihr19,Kar19} for discussion on the connections to the inertia bound, and on the theory behind the choice of weights). For more on the general relationships between graphs and spaces of matrices supported on their edges, see \cite{LQWWZ} and references therein.

Due to our lack of understanding of optimal weighted adjacency matrices, it is unclear what the limits of the inertia bound really are, even for specific small graphs. The most fundamental question in this direction is the following (which seems to have been first explicitly asked by Godsil~\cite{God04}, and reiterated in \cite{EG10,Elz07,Roo14}).
\begin{question}[Godsil \cite{God04}]\label{qu:godsil}
Is the inertia bound always tight? In other words, is it the case that for every graph $G$, there exists a weighted adjacency matrix $A$ with $\alpha(G)=n_{\geq 0}(A)$?
\end{question}

Godsil's question was open for more than a decade, until it was finally resolved in the negative by Sinkovic \cite{Sin18}.
\begin{theorem}[Sinkovic \cite{Sin18}]\label{thm:sinkovic}
    Let $G$ be the \emph{Paley graph} on $17$ vertices. Then $\alpha(G)=3$, but $n_{\geq 0}(A) \geq 4$ for every weighted adjacency matrix $A$ of $G$.
\end{theorem}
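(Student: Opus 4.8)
The statement has two parts. The first, $\alpha(P_{17})=3$, is a short finite verification: the ratio bound (\cref{thm:ratio}) applied to the ordinary adjacency matrix of $P_{17}$, whose eigenvalues are $8$ and $\tfrac{-1\pm\sqrt{17}}{2}$, gives the classical estimate $\alpha(P_{17})\le\sqrt{17}<4.13$, hence $\alpha(P_{17})\le 4$, and an independent set of size $4$ is then ruled out by a short case check (for instance, translating one vertex to $0$ and observing that among the $\binom{8}{3}$ triples of quadratic non-residues modulo $17$, none has all three pairwise differences non-residues). In particular, the inertia bound (\cref{thm:inertia}) already yields $n_{\ge 0}(A)\ge 3$ for every weighted adjacency matrix $A$, so the content of the theorem is to improve this to $n_{\ge 0}(A)\ge 4$.

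For the main assertion I would argue by contradiction. Suppose some weighted adjacency matrix $A$ of $P_{17}$ has $n_{\ge 0}(A)\le 3$; then by the inertia bound and $\alpha(P_{17})=3$ its inertia is exactly $(n_+,n_0,n_-)$ with $n_++n_0=3$ and $n_-=14$. A first, easy reduction to general position: for any weighted adjacency matrix $E$ and all small enough $t$, the $14$ strictly negative eigenvalues of $A$ persist, so $n_{\ge 0}(A+tE)\le 3$ and (again by the inertia bound) equality holds; hence we may replace $A$ by a generic nearby weighted adjacency matrix. The conceptual heart is then a reformulation via matrix completion: $n_{\ge 0}(A)\le 3$ holds if and only if there is a positive semidefinite matrix $B$ of rank at most $3$ with $B-A\succ 0$, and since $B-A$ agrees with $B$ except in the entries indexed by $E(P_{17})$, this says precisely that the partial matrix obtained from $B$ by deleting its edge-entries has a positive-definite completion. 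Writing $B_{ij}=\langle v_i,v_j\rangle$ with $v_1,\dots,v_{17}\in\R^3$, the task becomes: lift a rank-$3$ partial Gram matrix, whose specified pattern is $\overline{P_{17}}\cong P_{17}$, to a positive-definite Gram matrix — its diagonal and non-edge entries being determined by the $v_i$, its $68$ edge-entries free but constrained by positive-definiteness.

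I would now try to combine the rigidity of the rank-$3$ condition (every four of the $v_i$ are linearly dependent) with the large amount of structure in $P_{17}$: it is arc-transitive, strongly regular with parameters $(17,8,3,4)$, and self-complementary, and one can extract further constraints such as the fact that every (necessarily maximal) independent set $\{a,b,c\}$ of size $3$ has exactly two common neighbours — a short inclusion--exclusion on the strongly regular parameters — which also pins down how the remaining twelve vertices attach to $\{a,b,c\}$, and the observation that the negative-definite $14$-dimensional subspace must be a linear complement of $\langle e_a,e_b,e_c\rangle$ for every such triple. The hope is that these force enough linear and low-degree polynomial relations among the $v_i$ and the completed edge-entries to leave no consistent configuration.

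The step I expect to be genuinely hard — and the reason \cref{qu:godsil} remained open for more than a decade — is actually deducing the contradiction. Positive-definite completion problems for non-chordal patterns have no clean characterisation, and $P_{17}$ is very far from chordal (it abounds with induced $4$- and $5$-cycles); moreover the naive configuration space is high-dimensional, so the real challenge is to use the structure of $P_{17}$ together with general position to cut the problem down to something verifiable — a finite collection of explicit polynomial systems in boundedly many real variables — and then to check (by hand, or with a computer-algebra system) that none of them has a real solution. It is conceivable that the arithmetic of $\mathbb{F}_{17}$ (the eigenvectors of the Paley graph being governed by the Legendre symbol) affords a slicker, identity-based contradiction instead, but I would expect the route through a structural reduction and a finite verification to be the one that succeeds.
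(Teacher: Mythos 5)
This is a cited result in the paper, not one the authors prove: they refer the reader to Sinkovic's original article and remark only that his argument ``involves a great deal of casework'' specific to $P_{17}$. So there is no in-paper proof to compare your proposal against. Judged on its own terms, your reformulation is correct: $n_{\ge 0}(A)\le 3$ is indeed equivalent to the existence of a PSD matrix $B$ of rank at most $3$ with $B-A\succ 0$ (forward direction by inflating the nonnegative eigenspace of $A$ by $\varepsilon$; reverse direction by Courant--Fischer applied to the $14$-dimensional kernel of $B$, on which $A=-(B-A)$ is negative definite), and the translation into a rank-$3$ partial Gram matrix on $17$ vectors in $\R^3$ whose specified pattern is $\overline{P_{17}}\cong P_{17}$, with the $68$ edge-entries to be filled in so as to make the whole matrix positive definite, is sound. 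The first paragraph, establishing $\alpha(P_{17})=3$ via the ratio bound plus a finite residue check, is also fine.

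The genuine gap is that everything after the reformulation is conjectural. You explicitly acknowledge that you do not know how to derive the contradiction, and propose only a \emph{hope} that symmetry, general position, and the strongly-regular parameters will ``cut the problem down to something verifiable --- a finite collection of explicit polynomial systems in boundedly many real variables'' to be checked by hand or computer. That reduction is neither constructed nor specified, and the nonexistence check is neither performed nor sketched. Those two missing steps \emph{are} the theorem: they are precisely the casework that the paper credits to Sinkovic and that kept Godsil's question open for over a decade. As it stands, what you have written is a promising setup and a research plan, not a proof; it establishes the easy direction (setup and reduction to a completion problem) and defers the entire nontrivial content. If you want to turn this into a proof, you would need either to carry out the finite polynomial verification you allude to, or to consult Sinkovic's actual argument, which (as far as I can tell) proceeds along different lines --- an interlacing-and-signature analysis of carefully chosen principal submatrices rather than a completion reformulation --- so even the framing differs.
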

Sinkovic's proof involves a great deal of casework, and as such is highly specific to the 17-vertex Paley graph. However, in recent years, new techniques were developed which shed more light on Godsil's question. First, Man\v cinska and Roberson \cite{MaRo} introduced a new graph parameter, the \emph{quantum independence number} $\alpha_q$, which satisfies $\alpha_q(G) \geq \alpha(G)$ for every graph $G$. Moreover, they proved \cite[Corollary 4.8]{MaRo} that for infinitely many $n$, there exists an $n$-vertex graph with independence number $O(n^{1-\varepsilon})$ and quantum independence number $\Omega(n/{\log n})$, for some small absolute constant $\varepsilon>0$. More recently, Wocjan, Elphick and Abiad~\cite{WEA22} proved that the inertia bound is also an upper bound on the quantum independence number, even when one allows \emph{Hermitian} weights. Namely, they proved \cite[Theorem 3.3]{WEA22} that if $A$ is a Hermitian weighted adjacency matrix\footnote{Here, we say that $A$ is a Hermitian weighted adjacency matrix of $G$ if it is a Hermitian matrix with complex entries such that $A_{ij}=0$ whenever $ij \notin E(G)$.}  of $G$, then $\alpha_q(G) \leq n_{\geq 0}(A)$. Putting these results together, one obtains the following, stronger, negative answer to \cref{qu:godsil}.
\begin{theorem}[{\cite{MaRo,WEA22}}]\label{thm:quantum}
    There exist constants $C,c,\varepsilon>0$ such that the following holds. For infinitely many values of $n$, there exists an $n$-vertex graph with 
    \[
    \alpha(G) \leq Cn^{1-\varepsilon} \qquad \text{ and } \qquad n_{\geq 0} (A) \geq \alpha_q(G) \geq c \frac{n}{\log n}
    \]
    for every Hermitian weighted adjacency matrix $A$ of $G$.
\end{theorem}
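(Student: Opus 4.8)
The statement is a direct consequence of the two results quoted just above it, so the plan is essentially to combine them. First I would invoke the Man\v cinska--Roberson separation \cite{MaRo}: fix the constants $C,c,\varepsilon>0$ it provides, and for a value of $n$ for which the construction works, let $G$ be the resulting $n$-vertex graph, so that $\alpha(G)\le Cn^{1-\varepsilon}$ and $\alpha_q(G)\ge cn/\log n$. Then I would apply the Wocjan--Elphick--Abiad inequality \cite{WEA22}, which says that $\alpha_q(H)\le n_{\ge 0}(B)$ for \emph{every} graph $H$ and \emph{every} Hermitian weighted adjacency matrix $B$ of $H$, to $H=G$ and an arbitrary Hermitian weighted adjacency matrix $A$ of $G$; this gives $n_{\ge 0}(A)\ge\alpha_q(G)$. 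Concatenating the three inequalities yields the claim. The only point to check is that the two cited theorems are logically compatible in the sense of applying to the same object, and they are, since neither imposes any hypothesis on the graph; so at this level there is genuinely nothing more to do.

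It is nonetheless worth noting where the mathematical content sits, in case one wants a self-contained treatment rather than a citation. The inequality $\alpha_q(G)\le n_{\ge 0}(A)$ of \cite{WEA22} is the lighter ingredient: one takes the families of orthogonal projections on a finite-dimensional Hilbert space $\mathbb{C}^d$ that witness $\alpha_q(G)$, uses them to exhibit a $(d\cdot\alpha_q(G))$-dimensional subspace on which the Hermitian form associated with $A\otimes I_d$ is positive semidefinite, and concludes via $n_{\ge 0}(A\otimes I_d)=d\cdot n_{\ge 0}(A)$; this is the quantum analogue of the usual Cauchy-interlacing proof of \cref{thm:inertia}, in which an independent set contributes an all-zero principal submatrix. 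The substantive ingredient is the separation of \cite{MaRo}. Their graph is built from a communication problem with a large quantum advantage in one-way communication: from a (possibly partial) Boolean function one forms a graph on the consistent ``(input, message, output)'' configurations, with two configurations joined when they are mutually inconsistent, so that an error-free one-way protocol of cost $t$ bits (respectively qubits) corresponds to a classical (respectively quantum) independent set of size $2^t$. Feeding in a problem with an exponential one-way quantum/classical gap---such as Boolean Hidden Matching, which has an $O(\log n)$-qubit protocol but requires $n^{\Omega(1)}$ classical bits---and choosing the parameters so that the number of vertices and the independent-set size are polynomially related, one obtains graphs with $\alpha_q$ polynomially larger than $\alpha$; squeezing this down to the quantitative form $\alpha_q=\Omega(n/\log n)$ versus $\alpha=O(n^{1-\varepsilon})$ takes a careful choice of the underlying problem and its parameters.

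Accordingly, if one were to prove the statement from scratch, the main obstacle would lie entirely on the Man\v cinska--Roberson side: setting up the quantum-communication-to-independent-set dictionary with tight control on the vertex count, and importing a classical one-way communication lower bound strong enough to keep $\alpha(G)$ polynomially below $\alpha_q(G)$. Since we are free to quote \cite{MaRo} and \cite{WEA22}, however, the proof reduces to the short concatenation of inequalities described above.
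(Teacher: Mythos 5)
Your proof is correct and takes the same approach as the paper: the statement is obtained by directly concatenating the Man\v cinska--Roberson separation ($\alpha(G)\le Cn^{1-\varepsilon}$, $\alpha_q(G)\ge cn/\log n$) with the Wocjan--Elphick--Abiad inequality ($\alpha_q(G)\le n_{\ge 0}(A)$ for all Hermitian weighted adjacency matrices). The paper treats this as an immediate corollary of the two cited results and gives no further argument, exactly as in your first paragraph.
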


\cref{thm:quantum,thm:sinkovic} provide answers to \cref{qu:godsil}, but this is really just the tip of the iceberg. The most obvious follow-up question (raised by Rooney~\cite{Roo14} and Sinkovic~\cite{Sin18}) is how large the gap in \cref{thm:inertia} can be; \cref{thm:quantum} implies that this gap can be as large as $\Omega(n/{\log n})$, but perhaps it can be even larger.

Second, there is the quantum analogue of \cref{qu:godsil}, that is, the question of whether the inertia bound\footnote{Here, and from now on, when we say ``the inertia bound'' we will usually mean the minimum of $n_{\ge 0}(A)$ over all weighted adjacency matrices $A$ of a given graph.} is always tight for the quantum independence number. This question was answered by Wocjan, Elphick, and Abiad \cite{WEA22}, who gave an example of an 18-vertex graph $G$ with $\alpha_q(G) < n_{\geq 0}(A)$ for all Hermitian weighted adjacency matrices $A$ of $G$. However, this example does not rule out the possibility that every graph has a weighted adjacency matrix for which the gap between the quantum independence number and the inertia bound is always at most 1.

Third, even though we now know that the inertia bound is not always tight, perhaps it is still the case that the inertia bound is in some sense ``the best possible spectral bound'' on the independence number. Concretely, is the inertia bound always stronger than the ratio bound? Actually, this question is closely related to the previous one, since the ratio bound is also an upper bound on the quantum independence number of a graph~\cite[Corollary 4.9]{MaRo}.

As our main result, we address all the above questions, showing that in fact the inertia bound can dramatically underperform the ratio bound, and thereby also obtaining new bounds on the largest possible gap between the inertia bound and independence number.
\begin{theorem}\label{thm:intro main}
For infinitely many positive integers $n$, there exists an $n$-vertex regular graph $G$, whose (ordinary) adjacency matrix has minimum and maximum eigenvalues $\lambda_{\mathrm{min}}$ and $\lambda_{\mathrm{max}}$, respectively, such that 
    \[
    \alpha(G)\le \alpha_q(G)\le \left|\frac{\lambda_{\mathrm{min}}}{\lambda_{\mathrm{max}}-\lambda_{\mathrm{min}}}\right|n \leq 4n^{\frac 34}\qquad \text{ and } \qquad n_{\geq 0}(A) \geq \frac n4
    \]
    for every Hermitian weighted adjacency matrix $A$ of $G$.
\end{theorem}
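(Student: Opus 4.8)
The plan is to exhibit, for infinitely many $n$, a $d$-regular pseudorandom graph $G$ that is dense enough for the plain ratio bound to certify a small independence number, yet sparse and ``structureless'' enough that no weighted adjacency matrix can have many negative eigenvalues. The two requirements pin down the regime. On one hand $\alpha(G)\ge n/(d+1)$, so $\alpha(G)\le 4n^{3/4}$ forces $d\gtrsim n^{1/4}$; on the other hand a pseudorandom $d$-regular graph has $\lambda_{\min}=-\Theta(\sqrt d)$, so its ratio bound $\frac{|\lambda_{\min}|}{d-\lambda_{\min}}n\approx \frac{n}{\sqrt d}$ is of order $n^{3/4}$ precisely when $d\asymp\sqrt n$. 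I would therefore take $G$ to be a $d$-regular graph on $n$ vertices with $d=\Theta(\sqrt n)$ and $\lambda_{\min}(G)=-(2+o(1))\sqrt d$ — concretely a uniformly random $d$-regular graph, whose extreme eigenvalues are known to concentrate near $\pm 2\sqrt d$ in this range, or an explicit algebraic graph with these parameters such as a suitably regularized Erd\H os--R\'enyi polarity graph.

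The ratio-bound side is then routine: substituting $\lambda_{\max}=d$ and $\lambda_{\min}=-(2+o(1))\sqrt d$ into \cref{thm:ratio} gives $\alpha(G)\le \frac{|\lambda_{\min}|}{d-\lambda_{\min}}n=(2+o(1))\,n/\sqrt d\le 4n^{3/4}$ for large $n$. Since the unweighted ratio bound is also an upper bound on $\alpha_q(G)$ (Man\v{c}inska--Roberson), and $\alpha(G)\le\alpha_q(G)$, this yields the entire first chain of inequalities in the statement.

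The heart of the proof is the inertia-bound side: one must show $n_{\ge 0}(A)\ge n/4$, equivalently $n_{<0}(A)\le\tfrac34 n$, for \emph{every} Hermitian weighted adjacency matrix $A$ of $G$. One natural route is the Gram reformulation of the signature. Writing $A=A_+-A_-$ with $A_\pm\succeq 0$, the vanishing diagonal gives $\tr A=0$ and hence $\mathrm{diag}(A_+)=\mathrm{diag}(A_-)$; after rescaling coordinates (we may assume $A$ has no zero rows, as those only increase $n_{\ge 0}$) one obtains unit vectors $\hat u_1,\dots,\hat u_n$ spanning a space of dimension $n_{>0}(A)$ and unit vectors $\hat v_1,\dots,\hat v_n$ spanning a space of dimension $n_{<0}(A)$, with $\langle\hat u_i,\hat u_j\rangle=\langle\hat v_i,\hat v_j\rangle$ for every non-edge $ij$. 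If $n_{<0}(A)>\tfrac34 n$ then the $\hat v_i$ span more than $\tfrac34 n$ dimensions while the $\hat u_i$ are squeezed into fewer than $\tfrac14 n$ dimensions, although their two Gram matrices agree off $E(G)$. The aim is to show that the pseudorandomness of $G$ makes this impossible: being confined to a $(<\tfrac14 n)$-dimensional space forces $\sum_{i\ne j}\lvert\langle\hat u_i,\hat u_j\rangle\rvert^2\ge 3n$ by Cauchy--Schwarz (comparing $\tr\hat U$ and $\tr\hat U^2$), and the matching identities transport this ``large overlap mass'' onto the $\hat v_i$ across the non-edges — which, for a pseudorandom $\sqrt n$-regular graph, form an essentially complete, strongly expanding graph — so a second-moment/eigenvalue estimate exploiting the expander-mixing properties of $G$ should contradict $\hat V\succeq 0$ having rank close to $n$ with unit diagonal.

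I expect this last step to be the main obstacle, and for structural reasons it cannot be bypassed. There is no semidefinite-programming description of $\min_A n_{\ge 0}(A)$ to lean on, and one cannot route the lower bound through the independence number via interlacing: any induced subgraph all of whose weighted adjacency matrices have many nonnegative eigenvalues must itself have large independence number, which $\alpha(G)\le 4n^{3/4}$ forbids — so the bound must come from a genuinely global argument specific to $G$. Moreover $A$ ranges over a continuum, so a naive union bound is unavailable: one needs either the ``soft'' incompatibility argument above, or an $\varepsilon$-net over normalized weightings together with concentration, with the delicate point being uniform control of the inertia along the net. Finally, the gap between the constant $1/4$ in the statement and the truth — presumably $(1-o(1))n$, or at least $(\tfrac12-o(1))n$, since even a random $\pm1$ weighting has a roughly semicircular, balanced spectrum — simply records the slack in whichever of these arguments one pushes through.
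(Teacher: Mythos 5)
Your setup of the ratio-bound side is correct and matches the paper's in spirit: one wants a $d$-regular, $(2+o(1))\sqrt d$-pseudorandom graph with $d\asymp\sqrt n$, and the polarity graph of a projective plane (restricted to an induced regular piece of girth $\ge 5$, following Parsons) is indeed what the paper uses. But on the inertia-bound side there is a genuine gap, which you yourself flag: the Gram-matrix/expander-mixing argument you sketch is never carried to a contradiction, and you supply no mechanism by which the ``large overlap mass'' transported across non-edges actually violates $\hat V\succeq 0$. That last step is precisely where the paper does something different, and the key structural idea you are missing is that \emph{$C_4$-freeness}, not pseudorandomness, is what drives the lower bound on $n_{\ge 0}(A)$.

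The paper proves a general lemma (\cref{thm:main}): for \emph{any} Hermitian weighted adjacency matrix $A$ of a $C_4$-free graph (resp.\ girth-$\ge 5$ graph) one has $n_{\ge 0}(A)\ge \beta n$ (resp.\ $n/4$). The proof looks at the empirical spectral distribution $X$ of $A$ and bounds $\Pr(X>0)$ via a fourth-moment anticoncentration inequality (Zelen's inequality in the girth-$\ge 5$ case, where one also has $\E[X^3]=0$). The fourth moment $\frac1n\tr(A^4)$ counts weighted closed $4$-walks, and $C_4$-freeness forces all such walks to be degenerate (edges and cherries), giving $\E[X^4]\le 2(\E[X^2])^2$ once the rows are $L^2$-normalized. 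The catch is that one cannot normalize rows for free; the paper handles this with a matrix-scaling argument (Csima--Datta) combined with Sylvester's law of inertia, and falls back to induction via interlacing on a block-diagonal principal submatrix when scaling is impossible. Note that this contradicts your claim that ``one cannot route the lower bound through\dots interlacing'': interlacing on principal submatrices (\cref{lem:monotone}) is exactly what the paper's induction uses, and your reasoning there was circular (it presupposed the very equivalence between large inertia bound and large independence number that the theorem refutes).

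One more concrete issue with your construction: a uniformly random $d$-regular graph with $d\asymp\sqrt n$ is \emph{not} $C_4$-free (it has $\Theta(d^4)=\Theta(n^2)$ copies of $C_4$), so it is incompatible with the mechanism that actually makes the proof work; the polarity graph, which you mention only in passing, is essentially forced because it is the extremal $C_4$-free graph. Your estimate that the truth should be $(1-o(1))n$ also cannot be right in general for triangle-free graphs, since $n_{\ge 0}(A)\le\chi(\overline G)=n-\nu(G)$ via the clique-cover weighting noted in the paper's concluding remarks.
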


In fact, our main technical result is substantially stronger than \cref{thm:intro main}: it shows that the inertia bound is very weak whenever $G$ is $C_4$-free, i.e.\ does not contain $C_4$ as a subgraph. Moreover, we can obtain a small constant-factor improvement when $G$ is also $C_3$-free, i.e.\ has girth at least 5.

\begin{theorem}\label{thm:main}
Let $G$ be an $n$-vertex graph and let $A$ be any Hermitian weighted adjacency matrix of $G$.
\begin{thmenum}
\item \label{thmit:C4-free}If $G$ is $C_4$-free, then
    \[
    n_{\ge 0}(A) \geq \beta n,
    \]
    where $\beta = \sqrt 3 - \frac 32 \approx 0.232$.
\item \label{thmit:girth-5}If $G$ has girth at least 5, then
    \[
    n_{\ge 0}(A) \geq \frac n4.
    \]
\end{thmenum}
\end{theorem}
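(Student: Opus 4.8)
The plan is to bound $n_{<0}(A)=n-n_{\ge0}(A)$ from above. After harmless reductions — we may assume $G$ has no isolated vertices and, after deleting zero-weight edges (which keeps $G$ $C_4$-free), that every edge carries a nonzero weight — the starting point is the reformulation of $C_4$-freeness that any two vertices have at most one common neighbour. Equivalently, the columns $a^{(1)},\dots,a^{(n)}$ of $A$ (so $a^{(k)}$ is supported on $N(k)$) have pairwise overlaps of size at most $1$. Writing $A^2=D+N$ with $D=\operatorname{diag}\!\big((A^2)_{11},\dots,(A^2)_{nn}\big)$ the positive semidefinite diagonal part — note $(A^2)_{ii}=\sum_{k\sim i}\lvert A_{ik}\rvert^2$ — the off-diagonal part $N$ is then itself a weighted adjacency matrix, of the ``common-neighbour graph'', whose restriction to each $N(v)$ is (off the diagonal) a rank-one block $\big(A_{iv}\overline{A_{jv}}\big)_{i,j\in N(v)}$, and these blocks sit on edge-disjoint vertex sets. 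When $G$ has girth at least $5$ there are no triangles, so additionally $\tr(A^3)=0$ and $N$ is supported off the edges of $G$; this is the fact that will yield the better constant in part~(ii).

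For the spectral core I would play the positive semidefiniteness of $A^2$ against this structured decomposition by means of a one-parameter family of Hermitian matrices built as a quadratic expression in $A$ corrected by $D$ (or by a scalar multiple of the identity), indexed by $t>0$ and interpolating between $A$ — whose negative inertia we want — and $A^2\succeq0$. The role of $C_4$-freeness is to control the negative inertia of the members of the family: since $N$, up to the diagonal correction $D$, is a sum of rank-one pieces living on edge-disjoint neighbourhoods, the number of negative eigenvalues it can contribute is tightly bounded, and combining this with $D\succeq0$ while tracking how the eigenvalues of $A$ move with $t$ should give $n_{<0}(A)\le(1-\beta)n$ once $t$ is tuned to balance two opposing error terms (crudely, one scaling like $t$ and one like $1/t$). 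Optimising over $t$ should produce exactly the positive root $\beta=\sqrt3-\tfrac32$ of $4\beta^2+12\beta-3=0$. In the girth-$5$ case the term carrying $\tr(A^3)$ drops out, changing the optimisation and yielding the clean value $\tfrac14$.

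It is instructive to record the softer argument that works for sparse (or otherwise tame) $G$: in a $C_4$-free graph every neighbourhood induces a matching, so the ego-net of any vertex is a ``windmill'', and one can greedily carve out a large induced subgraph close to a disjoint union of stars; since a weighted star has exactly one negative eigenvalue, Cauchy interlacing then gives $n_{\ge0}(A)\ge(\text{size of the subgraph})-(\text{number of stars})$, which is a constant fraction of $n$ when degrees are bounded. This cannot be the whole proof, however, because for dense $C_4$-free graphs — polarity graphs, incidence graphs of projective planes, Moore-type graphs — no large induced star forest exists (the neighbourhoods, though structurally simple, are huge, so the closed neighbourhood of a single star already swallows a constant fraction of the vertices). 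For such graphs the bound must come from the genuinely spectral route above.

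I expect the main difficulty to be making the spectral estimate uniform over the choice of weights. Adversarial weightings — all the mass on one edge, or on a single star, or spread out so that the empirical eigenvalue distribution is as flat as possible (which defeats crude trace inequalities, since $\tr(A^4)$ can be as small as $\Theta(\tr(A^2)^2/n)$) — must all be absorbed; this is precisely why the truth is not simply $n_{\ge0}(A)\ge n/2$. Verifying that the rank-one-per-neighbourhood structure really does pin down the auxiliary family's negative inertia irrespective of the weights, and then getting the parameter balance exactly right so as to land on $\sqrt3-\tfrac32$ (respectively $\tfrac14$) rather than some weaker constant, is where I expect the bulk of the technical work to lie.
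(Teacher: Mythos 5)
Your proposal is a sketch of intent rather than a proof: the ``one-parameter family of Hermitian matrices built as a quadratic expression in $A$'' is never specified, nor is the mechanism by which its inertia would be controlled, and you acknowledge that ``getting the parameter balance exactly right'' is where the bulk of the technical work lies --- but that unfilled gap is precisely the content of the theorem. Your concern that adversarial weightings ``defeat crude trace inequalities'' because $\tr(A^4)$ can be as small as $\Theta(\tr(A^2)^2/n)$ also points in the wrong direction: in a moment-based argument a flat spectrum with small fourth moment makes the anticoncentration \emph{easier}, and the actual danger is a large fourth moment produced by wildly unbalanced weights. That is the obstruction the paper is organized around and your outline does not touch.

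The paper's route is different from yours and is short. Let $X$ be the empirical spectral distribution of a Hermitian weighted adjacency matrix $B$, i.e.\ $X$ takes each eigenvalue with probability $1/n$; the goal is $\pr(X>0)\geq\beta$. If every row of $B$ has unit $L^2$ norm, then $\E[X]=0$, $\E[X^2]=1$, and $C_4$-freeness forces every closed $4$-walk contributing to $\tr(B^4)$ to be degenerate (a single edge traversed back-and-forth, or a cherry traversed twice), which gives $\tr(B^4)\leq 2n$, i.e.\ $\E[X^4]\leq 2$. Explicit fourth-moment anticoncentration inequalities of He--Zhang--Zhang and Zelen then yield $\pr(X>0)\geq\sqrt3-\tfrac32$, and $\geq\tfrac14$ once triangle-freeness additionally supplies $\E[X^3]=0$. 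The general, unnormalized case is reduced to this one by matrix scaling: setting $M_{ij}=|A_{ij}|^2$, a DAD-type theorem of Datta and Csima--Datta produces a positive diagonal $D$ with $DMD$ doubly stochastic unless the support of $A$ has a degenerate block structure (some $S,T$ with $A[S,T]=0$ and $|S|+|T|\geq n$), in which case interlacing and induction on the two diagonal blocks finish. When the scaling exists, $B=\sqrt{D}\,A\,\sqrt{D}$ has unit row $L^2$ norms and, by Sylvester's law of inertia, $n_{\geq0}(B)=n_{\geq0}(A)$. Your scattered observations --- the rank-one structure of $A^2$ on neighbourhoods, the vanishing of $\tr(A^3)$ at girth at least $5$, the correct constants and the quadratic $4\beta^2+12\beta-3=0$ they solve --- are sound, and the star-forest interlacing heuristic for bounded-degree graphs is a nice sanity check, but none of this amounts to an argument, and the two load-bearing ideas (moment anticoncentration plus matrix scaling via Sylvester) are absent from your plan.
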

\cref{thm:main} immediately implies the existence of graphs for which the inertia bound is far from tight, by letting $G$ be a $C_4$-free graph whose independence number is $o(n)$. There are many examples of such graphs, defined explicitly or obtained via the probabilistic method. In particular, the statement of \cref{thm:intro main} follows from \cref{thmit:girth-5}, taking $G$ to be an edge-transitive induced subgraph of at least half the vertices of the polarity graph of a finite projective plane, with girth at least 5 (such a graph was shown to exist by Parsons~\cite[Theorem 1(a)]{Par76})\footnote{By \cite[Theorem 9]{Lov79}, the Lov\'asz theta number of $G$ is then $|\lambda_{\mathrm{min}}/(\lambda_{\mathrm{max}}-\lambda_{\mathrm{min}})|n$, where $\lambda_{\mathrm{min}},\lambda_{\mathrm{max}}$ are the minimum and maximum eigenvalues, respectively, of the ordinary adjacency matrix. The Lov\'asz theta number is monotonic with respect to induced subgraphs, and it is well-known (e.g.\ \cite{MuWi}) that the theta number of the full polarity graph $G_0$ is at most $2\ab{V(G_0)}^{3/4}$.}. It is an interesting byproduct of our results that the inertia bound is so weak for such algebraically-defined graphs, as generally speaking, techniques from spectral graph theory seem particularly well-suited to highly structured graphs with many symmetries.

We also remark that if one is interested in the largest possible {multiplicative} gap between the independence number and inertia bound, one can combine \cref{thmit:C4-free} with a result of Bohman and Keevash \cite{BoKe}: they used the \emph{random $C_4$-free process} to prove the existence of $C_4$-free graphs with independence number $O((n\log n)^{\frac 23})$. It is a major open problem, asked repeatedly by Erd\H os (e.g.\ \cite{MR975526,MR777160,MR804676,MR602413}), to determine whether there exist $C_4$-free graphs with independence number $n^{\frac 12 + o(1)}$.
We remark too that the constant factors $\beta$ and $\frac14$ in \cref{thm:main} appear to be the best possible with our technique, although there is no reason to believe they are optimal.

Although our proof is quite short, we end this introduction with a high-level sketch of the argument. Let $A$ be a weighted adjacency matrix of a $C_4$-free graph $G$. If we let the eigenvalues of $A$ be $\lambda_1 \geq \dotsb \geq \lambda_n$, and let $X$ be a random variable taking on value $\lambda_i$ with probability $1/n$ (this is the \emph{empirical spectral distribution}), then our task reduces to proving a lower bound on $\pr(X \geq 0)$. To this end, it suffices to study the \emph{moments} of this random variable: it turns out that $\pr(X \geq 0)$ is reasonably large whenever $\E [X^4]$ is not too much larger than $(\E [X^2])^2$ (\cref{lem:anticoncentration}). So, it suffices to prove an upper bound on the fourth moment of $X$. Using the hypothesis that $G$ is $C_4$-free, one can obtain such an upper bound, as long as one has some control on the sizes of entries of $A$ (see \cref{lem:normalized case}); however, this approach cannot work if the entries of $A$ come from many very different scales. To overcome this obstruction, we apply techniques from the field of matrix scaling (see \cref{lem:scaling,lem:CD}), together with Sylvester's law of inertia (\cref{lem:sylvester}), to either reduce to the case where the entries of $A$ have roughly the same size, or to split the problem into smaller sub-problems and apply induction.

In the next section, we collect a few preliminaries we will need in the proof of \cref{thm:main}, which we prove in \cref{sec:proof main}. We end in \cref{sec:conclusion} with some concluding remarks.

\begin{remark}
    \cref{thm:inertia} shows that the inertia bound can be much weaker than the ratio bound. As suggested by Anurag Bishnoi (following an early version of this paper), it is also natural to ask the opposite question: can the ratio bound be much weaker than the inertia bound? Perhaps the easiest way to demonstrate this is using graphs of the form $G=K_{d,d}\sqcup K_{d+1}\sqcup\dots\sqcup K_{d+1}$ (i.e., the disjoint union of a $d$-regular complete bipartite graph with several copies of the $d$-regular complete graph). Indeed, if there are $d$ copies of $K_{d+1}$, then $G$ has $n=2d+d(d+1)$ vertices; one can compute that the ratio bound is $n/2$ and that the inertia bound gives the exact independence number, which is $d+(1+\dots+1)=2d=O(\sqrt n)$. More interestingly, Ihringer~\cite{Ihr23} showed that an $n$-vertex graph arising from a so-called \emph{association scheme} introduced by Cameron and Seidel~\cite{CaSe} has ratio bound $\Omega(n^{3/4})$ and inertia bound $O(n^{1/2})$ (see also \cite{deCa00} for another application of the same association scheme). This example is significant because for graphs arising from association schemes, the ratio bound coincides with two other important bounds on the independence number of a graph: the Lov\'asz theta function and Delsarte's linear programming bound (see \cite{Schr79} for details).
\end{remark}

\section{Preliminaries}

We begin with a few preliminary results, which we mostly cite from previous works. 

\subsection{Probabilistic bounds via moments}
First, we need two inequalities, due respectively to He, Zhang, and Zhang \cite{HZZ} and Zelen \cite{Zel54}, related to the general phenomenon that if the fourth moment of a random variable is commensurate to its second moment squared, then the distribution of $X$ cannot be too ``extreme''. In particular, if such a random variable has mean zero, then it must have a reasonable proportion of its probability mass on each side of zero. We remark that a simpler result with worse quantitative dependencies appears in \cite[Lemma 3.2(i)]{AGK04}, and a general approach to proving such inequalities is given in~\cite{BP05,KrNu77}.
\begin{lemma}[{\cite[Theorem 2.1]{HZZ}}]\label{lem:HZZ}
    For any real-valued random variable $Y$ and for any $y>0$, we have
    \[
    \pr(Y \geq 0) \leq 1- \frac 49 (2\sqrt 3-3) \left(-\frac{2\E[Y]}{y} + \frac{3\E[Y^2]}{y^2} - \frac{\E[Y^4]}{y^4}\right). 
    \]
\end{lemma}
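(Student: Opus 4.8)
The plan is to exhibit an explicit degree-four polynomial $p$ that pointwise dominates the indicator $\mathbbm{1}[t\ge 0]$ and whose expectation against $Y$ reproduces the asserted right-hand side; taking expectations in the pointwise bound then gives the inequality in one line. Concretely, writing $c=\tfrac{4}{9}(2\sqrt{3}-3)>0$ (note $2\sqrt{3}>3$ since $12>9$), I would set
\[
p(t) \;=\; 1 - c\left(-\frac{2t}{y} + \frac{3t^2}{y^2} - \frac{t^4}{y^4}\right),
\]
so that, by linearity of expectation, $\E[p(Y)]$ is exactly the claimed bound. Since the right-hand side is an affine function of $\E[Y]$, $\E[Y^2]$ and $\E[Y^4]$ only (with no $\E[Y^3]$ term), this is the only candidate shape; the content is that the constant $c$ is chosen precisely so that $p(t)\ge\mathbbm{1}[t\ge 0]$ for every real $t$. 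Granting this, the proof concludes via $\pr(Y\ge 0)=\E[\mathbbm{1}[Y\ge 0]]\le\E[p(Y)]$.

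To verify the pointwise inequality, substitute $s=t/y$ (permissible since $y>0$), which reduces it to $1+c\,q(s)\ge\mathbbm{1}[s\ge 0]$ where $q(s)=s^4-3s^2+2s$. The key algebraic observation is the factorization $q(s)=s(s+2)(s-1)^2$. Two of the three cases are then immediate: if $s\ge 0$ then $q(s)\ge 0$, so $p\ge 1$; and if $s\le -2$ then $s(s+2)\ge 0$, so again $q(s)\ge 0$ and $p\ge 1>0$. The only remaining range is $-2<s<0$, where $q(s)<0$, and here one must check that $1+c\,q(s)$ stays nonnegative.

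For this last range I would use calculus: $q'(s)=2(s-1)(2s^2+2s-1)$, so the unique critical point of $q$ inside $(-2,0)$ is the root $s_0=\tfrac{-1-\sqrt{3}}{2}$ of $2s^2+2s-1=0$, and since $q(-2)=q(0)=0$ this critical point is the minimum on the interval. Using the relation $s_0^2=\tfrac12(1-2s_0)$ to reduce powers, one computes $q(s_0)=-\tfrac34(3+2\sqrt{3})$, and hence
\[
1+c\,q(s_0) \;=\; 1 - \frac{4}{9}(2\sqrt{3}-3)\cdot\frac{3}{4}(3+2\sqrt{3}) \;=\; 1 - \frac{(2\sqrt{3}-3)(2\sqrt{3}+3)}{3} \;=\; 1 - \frac{12-9}{3} \;=\; 0,
\]
so $p(s_0)=0=\mathbbm{1}[s_0\ge 0]$ and $p\ge 0$ throughout $(-2,0)$. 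This also shows the constant $\tfrac{4}{9}(2\sqrt{3}-3)$ is best possible for a bound of this form. The main (though entirely elementary) obstacle is exactly this optimization step: locating the critical point and evaluating $q$ there cleanly is what produces the somewhat mysterious constant, while everything else is bookkeeping. Alternatively one could bypass explicit calculus by writing $1+c\,q(s)$ as a manifestly nonnegative expression on $(-2,0)$ — a sum of squares, or a square times a nonnegative factor — but finding such a certificate amounts to the same computation.
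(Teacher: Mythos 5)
Your proof is correct. The paper cites this inequality from He--Zhang--Zhang without reproducing a proof, so there is no internal argument to compare against; but the route you take---exhibiting an explicit degree-four polynomial majorant $p$ of the indicator $\mathbbm{1}[t\ge 0]$ and taking expectations---is the canonical generalized-Chebyshev approach to fourth-moment tail bounds, and is precisely what the paper alludes to when it points to \cite{BP05,KrNu77} for ``a general approach to proving such inequalities.'' I checked the details: after substituting $s=t/y$, the polynomial $q(s)=s^4-3s^2+2s$ indeed factors as $s(s+2)(s-1)^2$; the two ``easy'' cases $s\ge 0$ and $s\le -2$ give $q\ge 0$; on $(-2,0)$ the derivative $q'(s)=2(s-1)(2s^2+2s-1)$ has the single interior root $s_0=\frac{-1-\sqrt3}{2}$, and the reduction via $s_0^2=\frac{1-2s_0}{2}$ gives $q(s_0)=-\frac34+3s_0=-\frac34(3+2\sqrt3)$, so that $1+c\,q(s_0)=1-\frac{(2\sqrt3-3)(2\sqrt3+3)}{3}=0$ with $c=\frac49(2\sqrt3-3)$. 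Thus $p\ge\mathbbm1[\cdot\ge 0]$ pointwise and the expectation step closes the argument (with the usual implicit convention that all moments through the fourth are finite; otherwise the bound is vacuous since the right-hand side is $+\infty$). One small caveat: your closing assertion that the constant is ``best possible for a bound of this form'' is only established in the sense that this particular polynomial cannot majorize the indicator if $c$ is increased; sharpness of the inequality itself for random variables would require a witnessing distribution, but this aside does not affect the proof.
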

One can get stronger bounds if one is given information on $\E[Y^3]$; the following such result is a special case of \cite[equation (12)]{Zel54}, specialized to $\E[Y^3]=0$.
\begin{lemma}[{\cite[equation (12)]{Zel54}}]\label{lem:zelen}
    For any real-valued random variable $Y$ with $\E[Y]=0, \E[Y^2]=1$, and $\E[Y^3]=0$, and any $a \in (-\sqrt{\E[Y^4]},\sqrt{\E[Y^4]})$, we have
    \[
    \pr(Y < a) \geq \frac{1}{2\sqrt{\E[Y^4]}\left(\sqrt{\E[Y^4]}-a\right)}.
    \]
\end{lemma}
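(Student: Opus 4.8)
The plan is to prove this via the standard ``polynomial method'' for one-sided Chebyshev-type inequalities using moments up to order four. The key point is that if $p(x)=c_0+c_1x+c_2x^2+c_3x^3+c_4x^4$ is any real polynomial of degree at most $4$, then under the hypotheses $\E[Y]=0$, $\E[Y^2]=1$, $\E[Y^3]=0$ we have $\E[p(Y)]=c_0+c_2+c_4\E[Y^4]$, a quantity depending only on $\E[Y^4]$. Hence it suffices to produce a single such $p$ satisfying
\[
p(x)\le \mathbbm{1}[x<a]\ \text{ for all }x\in\R, \qquad\text{and}\qquad c_0+c_2+c_4\E[Y^4]=\frac{1}{2\sqrt{\E[Y^4]}\,\bigl(\sqrt{\E[Y^4]}-a\bigr)};
\]
the first property gives $\pr(Y<a)=\E[\mathbbm{1}[Y<a]]\ge\E[p(Y)]$, and the second then yields the claimed bound. (We may assume the right-hand side does not exceed $1$, since otherwise the stated inequality is not meaningful.)

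To find the right $p$, one guesses the \emph{extremal distribution} --- the distribution with the prescribed moments that minimizes $\pr(Y<a)$ --- and reads $p$ off from complementary slackness: $p$ must touch the step function $\mathbbm{1}[x<a]$, tangentially, at every atom of the extremal distribution. Writing $m:=\E[Y^4]$ and recording that $m\ge(\E[Y^2])^2=1$ (so $\sqrt m\ge1$), the extremal distribution is, for $a$ in the range of interest, a three-point distribution with atoms at some $v<a$, at $a$ itself, and at $\sqrt m$; correspondingly $p$ is the unique polynomial of degree at most $4$ with a double root at $\sqrt m$, with a local maximum equal to $1$ at $v$, and with $p(a)=0$ (these are five conditions, matching the five coefficients of a quartic). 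In the representative case $a=0$ the extremal distribution is $\tfrac1{2m}\delta_{-\sqrt m}+\bigl(1-\tfrac1m\bigr)\delta_0+\tfrac1{2m}\delta_{\sqrt m}$ and the recipe produces $p(x)=-\tfrac14\,t(t-1)^2(2t+3)$ with $t:=x/\sqrt m$; one checks directly that $\E[p(Y)]=\tfrac1{2m}$, matching the target. The general case is analogous, with $v$ and the coefficients of $p$ explicit (if unlovely) functions of $a$ and $m$, obtained by solving the moment equations for the extremal distribution.

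The main obstacle, and essentially all of the work, is verifying the pointwise inequality $p(x)\le\mathbbm{1}[x<a]$, i.e.\ that $p(x)\le0$ for $x\ge a$ and $p(x)\le1$ for all $x$. This amounts to exhibiting a nonnegativity certificate for each of the quartics $-p(x)$ (on the ray $x\ge a$) and $1-p(x)$ (on all of $\R$): one wants to write each as a perfect square times a factor of visibly constant sign on the relevant region --- as in the case $a=0$, where $-p(x)=\tfrac14 t(t-1)^2(2t+3)\ge0$ for $t\ge0$, and $1-p(x)\ge0$ since $g(t):=-\tfrac14 t(t-1)^2(2t+3)$ is nonpositive outside $[-\tfrac32,0]$ while on $[-\tfrac32,0]$ it attains its maximum $1$ at its unique critical point there, $t=-1$. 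Carrying this out for general $a$ is routine but somewhat delicate, since one must track the locations of the critical points of $p$ as functions of $a$ and $m$; alternatively one can bypass the explicit calculation by invoking the general theory of such moment inequalities \cite{BP05,KrNu77}, or simply cite Zelen's original derivation \cite{Zel54}.
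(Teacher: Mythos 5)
The paper does not prove this lemma; it simply cites Zelen~\cite{Zel54} (and, in the surrounding discussion, points to~\cite{BP05,KrNu77} for the general theory of such moment inequalities). So there is no ``paper's proof'' to compare against --- your proposal is the only actual argument on the table.

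Your approach is the standard moment-polynomial (duality) method, which is indeed how such one-sided Chebyshev-type bounds are proved in~\cite{BP05,KrNu77}, and it is carried out correctly in the representative case $a=0$: the polynomial $p(x)=-\tfrac14 t(t-1)^2(2t+3)$ with $t=x/\sqrt m$ does satisfy $\E[p(Y)]=\tfrac1{2m}$ (the $t^3$ and $t$ terms vanish under $\E[Y]=\E[Y^3]=0$, and the $t^4,t^2$ terms give $-\tfrac1{2m}+\tfrac1m$), it is $\le 0$ for $t\ge0$ by inspection of the factors, and $1-p$ is nonnegative everywhere since $g(t)=t(t-1)^2(2t+3)$ has derivative $(t-1)(8t-3)(t+1)$, so on $(-\tfrac32,0)$ its unique critical point is $t=-1$ with $g(-1)=-4$. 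It is worth noting that $a=0$ is the only case the paper actually invokes (in Lemma~\ref{lemit:anticoncentration-2}), so your explicit verification already suffices for the application. For general $a$ you rightly flag that the bookkeeping becomes messier --- in particular the outer atom of the extremal distribution is not at $\sqrt m$ in general, and the system you set up (five interpolation conditions in six unknowns including $v$) needs to be closed by matching moments, which you gloss over --- but defaulting to the citation for that case is exactly what the paper itself does.
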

We will apply \cref{lem:HZZ,lem:zelen} in the following form, suited to our purposes.
\begin{lemma}\label{lem:anticoncentration}
    Let $X$ be a real-valued random variable satisfying $\E[X]=0,\E[X^2]=1$, and $\E[X^4] \leq 2$. 
    \begin{lemenum}
    \item \label{lemit:anticoncentration-1} We have
    $\pr(X >0) \geq \beta$,
    where $\beta=\sqrt 3-\frac 32$ as in \cref{thm:main}.
    \item \label{lemit:anticoncentration-2} If moreover $\E[X^3]=0$, then $\pr(X>0) \geq \frac 14$.
    \end{lemenum}
\end{lemma}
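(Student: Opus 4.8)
The plan is to obtain both parts by applying the cited inequalities of He--Zhang--Zhang (\cref{lem:HZZ}) and Zelen (\cref{lem:zelen}) to the reflected random variable $Y=-X$. Working with $Y$ is convenient because those lemmas give an upper bound on $\pr(Y\ge0)$ and a lower bound on $\pr(Y<a)$, whereas we want a lower bound on $\pr(X>0)$; and $\pr(X>0)=\pr(Y<0)=1-\pr(Y\ge0)$. Note that $Y$ inherits all the moment hypotheses: $\E[Y]=0$, $\E[Y^2]=1$, $\E[Y^4]=\E[X^4]\le2$, and in part~(ii) also $\E[Y^3]=-\E[X^3]=0$.

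For part~(i), I would apply \cref{lem:HZZ} to $Y$ with a parameter $y>0$ to be chosen. Since $\E[Y]=0$ and $\E[Y^2]=1$, the first term inside the parenthesis vanishes, and the bound simplifies to
\[
\pr(X>0)=1-\pr(Y\ge0) \ge \tfrac49(2\sqrt3-3)\left(\frac{3}{y^2}-\frac{\E[Y^4]}{y^4}\right) \ge \tfrac49(2\sqrt3-3)\left(\frac{3}{y^2}-\frac{2}{y^4}\right),
\]
using $\E[Y^4]\le2$ (and $2\sqrt3-3>0$). Writing $t=1/y^2>0$, the expression $3t-2t^2$ is maximized at $t=3/4$ with maximum value $9/8$, so choosing $y^2=4/3$ gives $\pr(X>0)\ge\tfrac49(2\sqrt3-3)\cdot\tfrac98=\tfrac12(2\sqrt3-3)=\sqrt3-\tfrac32=\beta$, as desired.

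For part~(ii), I would apply \cref{lem:zelen} to $Y$, which is legitimate since now $\E[Y^3]=0$. By the Cauchy--Schwarz inequality, $\E[Y^4]\ge(\E[Y^2])^2=1>0$, so $0$ lies in the open interval $(-\sqrt{\E[Y^4]},\sqrt{\E[Y^4]})$; taking $a=0$ in \cref{lem:zelen} yields
\[
\pr(X>0)=\pr(Y<0) \ge \frac{1}{2\sqrt{\E[Y^4]}\cdot\sqrt{\E[Y^4]}} = \frac{1}{2\E[Y^4]} \ge \frac14,
\]
again using $\E[Y^4]\le2$.

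I do not anticipate any real obstacle here: the substance of the statement is entirely packaged in \cref{lem:HZZ,lem:zelen}, and all that remains is the elementary one-variable optimization in part~(i) and the routine bookkeeping of reflecting $X$ and checking that $a=0$ is an admissible choice in \cref{lem:zelen} — which reduces to the trivial fact $\E[Y^4]\ge1>0$. The only mild subtlety worth recording carefully is this admissibility condition together with the positivity of $2\sqrt3-3$, both of which are immediate.
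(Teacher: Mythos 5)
Your proposal is correct and matches the paper's proof essentially verbatim: both reflect to $Y=-X$, apply \cref{lem:HZZ} with the optimal choice $y=2/\sqrt3$ (which you derive explicitly rather than just asserting) for part~(i), and apply \cref{lem:zelen} with $a=0$ for part~(ii). The extra checks you record — that $a=0$ is admissible since $\E[Y^4]\ge1$, and that $2\sqrt3-3>0$ — are fine, if slightly more than the paper bothers to spell out.
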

\begin{proof}
    Let $Y=-X$, and note that $\E[Y]=0,\E[Y^2]=1$, and $\E[Y^4] \leq 2$. We apply \cref{lem:HZZ} with $y=2/\sqrt 3$ (which is chosen to yield the best possible bound). We find that
    \begin{align*}
        \pr(X>0) &=\pr(Y<0) =1-\pr(Y \geq 0)\geq \frac 49 (2\sqrt 3-3) \left(\frac{3}{y^2} - \frac{2}{y^4}\right)=\sqrt 3-\frac 32=\beta,
    \end{align*}
    which proves \cref{lemit:anticoncentration-1}. Under the added assumption that $\E[X^3]=0$, we have that $\E[Y^3]=0$ as well. Applying \cref{lem:zelen} with $a=0$ shows that
    \[
    \pr(X>0) = \pr(Y<0) \geq \frac{1}{2\E[Y^4]} \geq \frac 14,
    \]
    proving \cref{lemit:anticoncentration-2}.
\end{proof}

\subsection{Simple facts about eigenvalues}
We will need two basic results of linear algebra. The first is called Sylvester's law of inertia; see e.g.\ \cite[Theorem~4.5.8]{HJ13} for a proof.
\begin{lemma}\label{lem:sylvester}
    Let $A$ be a Hermitian $n \times n$ matrix, let $Z$ be an invertible $n \times n$ matrix, and let $B=ZAZ^*$. Then $A$ and $B$ have the same number of positive, negative, and zero eigenvalues. In particular, $n_{\ge 0}(A)=n_{\ge 0}(B)$.
\end{lemma}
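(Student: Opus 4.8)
The plan is to prove the statement via the variational characterization of the signature of a Hermitian matrix. Write $n_+(M), n_-(M), n_0(M)$ for the numbers of positive, negative, and zero eigenvalues of a Hermitian matrix $M$, so that $n_+(M)+n_-(M)+n_0(M)=n$. The zero eigenvalues are the easy case: since $Z$ and $Z^*$ are invertible, one has $\ker B = \ker(ZAZ^*) = (Z^*)^{-1}\ker A$, and since $(Z^*)^{-1}$ is invertible this gives $n_0(B) = \dim\ker B = \dim\ker A = n_0(A)$ (equivalently, $\operatorname{rank} B = \operatorname{rank} A$). Hence it suffices to prove $n_+(A) = n_+(B)$, because then $n_-(A) = n - n_+(A) - n_0(A) = n - n_+(B) - n_0(B) = n_-(B)$ as well.

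The key lemma I would isolate is that, for any Hermitian $M$, the number $n_+(M)$ equals the maximum dimension of a subspace $V \subseteq \mathbb{C}^n$ on which the Hermitian form $x \mapsto x^* M x$ is positive definite (that is, $x^* M x > 0$ for every nonzero $x \in V$). This follows from the spectral theorem: writing $M = UDU^*$ with $U$ unitary and $D$ diagonal, the span of the columns of $U$ indexed by positive diagonal entries is such a subspace of dimension $n_+(M)$; conversely, any subspace of dimension exceeding $n_+(M)$ must meet the $(n-n_+(M))$-dimensional span of the remaining columns of $U$ nontrivially, and on that span the form is non-positive, so no positive-definite subspace of larger dimension can exist.

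Granting this characterization, the equality $n_+(A)=n_+(B)$ is immediate. If $V$ is a positive-definite subspace for $A$, set $W := (Z^*)^{-1}V$, so $\dim W = \dim V$; for a nonzero vector $y = (Z^*)^{-1}x \in W$ we compute $y^* B y = y^* Z A Z^* y = (Z^* y)^* A (Z^* y) = x^* A x > 0$, so $W$ is a positive-definite subspace for $B$, giving $n_+(B) \ge n_+(A)$. The reverse inequality follows by symmetry, applying the same argument to the identity $A = Z^{-1} B (Z^{-1})^*$. This finishes the proof, and the final assertion $n_{\ge 0}(A) = n_{\ge 0}(B)$ is just $n_+(A)+n_0(A) = n_+(B)+n_0(B)$.

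I do not expect a genuine obstacle here: this is a classical fact, and the only points demanding care are keeping the conjugate transposes consistent in the Hermitian (rather than real symmetric) setting and proving the variational characterization cleanly instead of taking it on faith. An alternative route would be a continuity argument: choose a continuous path $Z_t$ from $Z$ to the identity inside the connected group $\mathrm{GL}_n(\mathbb{C})$, so that $B_t := Z_t A Z_t^*$ is a continuous path of Hermitian matrices from $B$ to $A$ of constant rank; since the eigenvalues vary continuously and none can cross zero without changing the rank, $n_+$ and $n_-$ stay constant along the path. I would nevertheless prefer the variational proof, as it avoids invoking connectedness of $\mathrm{GL}_n(\mathbb{C})$ and continuity of eigenvalues.
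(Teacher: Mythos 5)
Your proof is correct. Note, however, that the paper itself does not prove this lemma at all: it is quoted as a classical fact with a citation to Horn and Johnson (\cite[Theorem~4.5.8]{HJ13}), so there is no paper proof to compare against. What you have supplied is the standard textbook argument for Sylvester's law of inertia, and all the steps check out: the rank (and hence $n_0$) is preserved because $\ker B = (Z^*)^{-1}\ker A$; the variational characterization of $n_+$ via maximal positive-definite subspaces follows directly from the spectral theorem, with the upper bound coming from the fact that any subspace of larger dimension must intersect the non-positive eigenspace; and the bijection $V\mapsto (Z^*)^{-1}V$ together with the identity $y^*By=(Z^*y)^*A(Z^*y)$ transfers positive-definite subspaces in both directions, using $A=Z^{-1}B(Z^{-1})^*$ for the reverse. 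The arithmetic reduction of $n_-$ to $n_+$ and $n_0$ is also fine, and the conjugate transposes are handled consistently in the Hermitian setting. The alternative continuity argument you sketch is also a valid (and well-known) route, but as you say the variational one is cleaner and self-contained.
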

We will also use the following simple consequence of Cauchy's interlacing formula; see e.g.\ \cite[Theorem~9.1.1]{GR01} or \cite[Theorem~4.3.28]{HJ13} for a proof. We remark that it immediately implies the inertia bound if applied to a zero principal submatrix, i.e.\ an independent set.
\begin{lemma}\label{lem:monotone}
    Let $A$ be a Hermitian matrix, and let $A'$ be a principal submatrix of $A$. Then $n_{\ge 0}(A) \geq n_{\ge 0}(A')$.
\end{lemma}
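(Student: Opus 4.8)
The plan is to reduce to the case in which $A'$ is obtained from $A$ by deleting a single row together with the corresponding column, and then to quote Cauchy's interlacing inequalities. Concretely, if $A$ is $n \times n$ and $A'$ is an $m \times m$ principal submatrix, I would choose a chain $A = A_0, A_1, \dots, A_{n-m} = A'$ of principal submatrices in which each $A_{i+1}$ arises from $A_i$ by deleting one index; it then suffices to prove $n_{\ge 0}(A_i) \ge n_{\ge 0}(A_{i+1})$ for each $i$ and compose the inequalities.

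For the one-step claim, suppose $B$ is Hermitian of size $k \times k$ with eigenvalues $\lambda_1 \ge \dots \ge \lambda_k$, and $B'$ is a $(k-1) \times (k-1)$ principal submatrix with eigenvalues $\mu_1 \ge \dots \ge \mu_{k-1}$. Cauchy interlacing gives $\lambda_i \ge \mu_i \ge \lambda_{i+1}$ for all $1 \le i \le k-1$; in particular $\lambda_i \ge \mu_i$ for every such $i$. Writing $j := n_{\ge 0}(B')$, we have (when $j \ge 1$) $\mu_j \ge 0$, whence $\lambda_j \ge \mu_j \ge 0$, so $\lambda_1, \dots, \lambda_j \ge 0$ and thus $n_{\ge 0}(B) \ge j$; the case $j = 0$ is trivial. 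This establishes the one-step claim, and hence the lemma.

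I do not anticipate any genuine difficulty here: the argument is entirely routine, and the only points requiring a little care are the degenerate cases ($m = n$, or $j = 0$) and recording the precise form of the interlacing inequalities, which can be cited from e.g.\ \cite[Theorem~9.1.1]{GR01} or \cite[Theorem~4.3.28]{HJ13}. If one prefers to bypass interlacing, an equally short route is the variational identity $n_{\ge 0}(A) = \max\{\dim V : x^* A x \ge 0 \text{ for all } x \in V\}$: given a subspace of $\mathbb{C}^m$ witnessing $n_{\ge 0}(A')$, padding its vectors with zeros on the complementary coordinates yields a subspace of $\mathbb{C}^n$ of the same dimension on which $x^* A x \ge 0$, so that $n_{\ge 0}(A) \ge n_{\ge 0}(A')$ directly.
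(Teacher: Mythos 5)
Your argument is correct and is exactly the one the paper intends: it cites this lemma as a simple consequence of Cauchy interlacing (with the same references), and your reduction to deleting one row/column at a time, followed by the inequality $\lambda_i \ge \mu_i$, is the standard way to extract the conclusion. The alternative variational argument you sketch is also valid and arguably cleaner, but both routes are routine and align with the paper's treatment.
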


\subsection{Matrix scaling}
Finally, we need a result about matrix scaling. Matrix scaling deals with questions about when one can scale the rows and columns of a matrix in order to obtain specified row and column sums. Somewhat surprisingly, it turns out that such a scaling is possible if and only if the set of non-zero entries of the matrix satisfies certain combinatorial conditions. For more on the general matrix scaling problem, see e.g.\ the survey \cite{Idel16}.

Specifically, we use the following result, which was proved by Datta~\cite{Dat70} and subsequently reproved by Csima and Datta~\cite{CD72}. Here and for the rest of the paper, we use the notation $[n]=\{1,\dots,n\}$.
Recall that a matrix is \emph{doubly stochastic} if all its row and column sums are equal to $1$.
\begin{lemma}[{\cite{Dat70,CD72}}]\label{lem:CD}
    Let $M$ be a symmetric $n\times n$ matrix with non-negative real entries. The following two conditions are equivalent. 
    \begin{enumerate}
        \item There exists a diagonal matrix $D$ with strictly positive diagonal entries such that $DMD$ is doubly stochastic.
        \item For every $(a,b) \in [n]^2$ with $M_{ab} \neq 0$, there exists a permutation $\pi \in S_n$ with $\pi(a)=b$ such that $M_{i\pi(i)}\neq 0$ for all $i \in [n]$.
    \end{enumerate}
\end{lemma}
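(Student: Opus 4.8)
The plan is to prove the two implications separately: $(1)\Rightarrow(2)$ is a quick consequence of the Birkhoff--von Neumann theorem, whereas $(2)\Rightarrow(1)$ is the substantive direction, which I would establish by a convex-optimization (variational) argument. For $(1)\Rightarrow(2)$, suppose $D=\operatorname{diag}(d_1,\dots,d_n)$ has strictly positive diagonal and $S:=DMD$ is doubly stochastic. Since all $d_i>0$, the matrix $S$ has exactly the same support (set of nonzero entries) as $M$. By Birkhoff--von Neumann, write $S=\sum_k c_k P_{\pi_k}$ with $c_k>0$, $\sum_k c_k=1$, and $\pi_k\in S_n$. Given $(a,b)$ with $M_{ab}\neq 0$ we have $S_{ab}>0$, so some $P_{\pi_k}$ has a $1$ in position $(a,b)$, i.e.\ $\pi_k(a)=b$; and then $S_{i\pi_k(i)}\geq c_k>0$, hence $M_{i\pi_k(i)}\neq 0$, for every $i$. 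So $\pi=\pi_k$ works.

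For $(2)\Rightarrow(1)$ (where we may assume $M\neq 0$, since otherwise $(2)$ is vacuous and there is nothing to scale), I would introduce the convex function
\[
F(x)=\sum_{i,j\in[n]}M_{ij}e^{x_i+x_j}-2\sum_{i\in[n]}x_i,\qquad x\in\mathbb{R}^n,
\]
which is convex because each $M_{ij}e^{x_i+x_j}$ is a non-negative multiple of the exponential of a linear form. A one-line computation using that $M$ is symmetric gives $\partial F/\partial x_k=2\bigl(\sum_j M_{kj}e^{x_k+x_j}-1\bigr)$, so at any critical point $x^\ast$ the matrix $D:=\operatorname{diag}(e^{x_1^\ast},\dots,e^{x_n^\ast})$ has positive diagonal and $DMD$ has all row sums equal to $1$; by symmetry of $M$ the column sums equal $1$ as well, so $DMD$ is doubly stochastic. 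Hence everything reduces to showing that $F$ attains its infimum.

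The only directions in which $F$ fails to be coercive lie in the subspace $N=\{u\in\mathbb{R}^n:u_i+u_j=0 \text{ whenever } M_{ij}\neq 0\}$, and this is where condition $(2)$ enters. Fixing any permutation $\pi$ with $M_{i\pi(i)}\neq 0$ for all $i$, one has $\sum_i u_i=\tfrac12\sum_i(u_i+u_{\pi(i)})$ for every $u$, so $u\in N$ forces $\sum_i u_i=0$ and hence $F(x+u)=F(x)$; thus it suffices to prove that $F|_{N^\perp}$ is coercive. If it were not, I would take $x^{(k)}\in N^\perp$ with $\|x^{(k)}\|\to\infty$ and $F(x^{(k)})$ bounded above, pass to a subsequence so that $v:=\lim_k x^{(k)}/\|x^{(k)}\|$ exists (a unit vector in $N^\perp$), and argue as follows. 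First, if $v_i+v_j>0$ for some $(i,j)$ in the support of $M$, then one term of $\sum M_{ij}e^{x_i+x_j}$ blows up exponentially fast in $\|x^{(k)}\|$ (and all such terms are non-negative), while the linear part of $F$ is only $O(\|x^{(k)}\|)$, so $F(x^{(k)})\to+\infty$, a contradiction; hence $v_i+v_j\le 0$ on the support. Summing this over the permutation $\pi$ from $(2)$ gives $2\sum_i v_i=\sum_i(v_i+v_{\pi(i)})\le 0$; and if equality held, then for each nonzero entry $(a,b)$ the permutation supplied by $(2)$ through $(a,b)$ would force $v_a+v_b=0$, i.e.\ $v\in N$, which is impossible since $0\neq v\in N^\perp$. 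So $\sum_i v_i<0$, and then the linear term $-2\sum_i x^{(k)}_i$ drives $F(x^{(k)})\to+\infty$, again a contradiction. Therefore $F|_{N^\perp}$ is coercive, hence attains its minimum at some $x^\ast$; since $F$ is constant along $N$, this $x^\ast$ is a global minimizer of the smooth function $F$, so $\nabla F(x^\ast)=0$ and we are done.

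I expect the coercivity step to be the main obstacle: one must correctly identify the degenerate subspace $N$ — which reflects the genuine non-uniqueness of the scaling $D$, e.g.\ for bipartite supports — and then extract the combinatorial hypothesis $(2)$ from the purely analytic failure of coercivity, via a full-support permutation. A more elementary-looking alternative would be the iterative ``Sinkhorn'' scheme (alternately rescaling rows and columns) together with a convergence proof, but making that rigorous in the presence of zero entries is at least as delicate, so the variational route seems the cleanest to write up.
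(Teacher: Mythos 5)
The paper does not prove this lemma; it is stated with a citation to \cite{Dat70,CD72}, so there is no internal proof to compare against. Your proposal is correct and complete. The direction $(1)\Rightarrow(2)$ via Birkhoff--von Neumann is exactly right: since $D$ has strictly positive diagonal, $DMD$ has the same support as $M$, and any permutation matrix appearing with positive coefficient in the Birkhoff decomposition and covering position $(a,b)$ supplies the required $\pi$. For $(2)\Rightarrow(1)$, the variational argument is sound: $F$ is convex and smooth, its critical points produce the scaling $D$ (your gradient computation and the symmetry observation are both correct), and the issue is existence of a minimizer. You correctly identify the degeneracy subspace $N$, use a single full-support permutation from $(2)$ to show $\sum_i u_i=0$ on $N$ and hence that $F$ is constant along $N$, and then prove coercivity of $F|_{N^\perp}$ by a blow-up argument: any escaping unit direction $v$ must satisfy $v_i+v_j\le 0$ on the support (else an exponential term dominates), hypothesis $(2)$ applied through each nonzero entry upgrades equality in $2\sum_i v_i\le 0$ to $v\in N$ (impossible for $0\ne v\in N^\perp$), and strict inequality makes the linear term dominate. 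The classical proofs in the cited references proceed instead by iterated Sinkhorn-type row/column scaling and a limit argument; your entropy-minimization route is a well-known alternative in matrix-scaling theory and, as you observe, sidesteps the delicate convergence analysis of the iteration in the presence of zeros.
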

We will use the following simple corollary of \cref{lem:CD}. 
If $M$ is an $n \times n$ matrix and $S,T \subseteq [n]$, we denote by $M[S,T]$ the submatrix of $M$ of rows and columns in $S$ and $T$, respectively. 
\begin{lemma}\label{lem:scaling}
    Let $M$ be a symmetric $n \times n$ matrix with non-negative entries. Suppose that for all non-empty $S,T\subseteq [n]$ with $M[S,T]=0$, we have $\ab S + \ab T < n$. Then there exists a diagonal matrix $D$ with strictly positive diagonal entries such that $DMD$ is doubly stochastic.
\end{lemma}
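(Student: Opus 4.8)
\emph{Proof proposal.} The plan is to derive the conclusion directly from \cref{lem:CD}: it suffices to check that the combinatorial condition (2) of that lemma holds for $M$. So I would fix a pair $(a,b)\in[n]^2$ with $M_{ab}\neq 0$, and aim to produce a permutation $\pi\in S_n$ with $\pi(a)=b$ and $M_{i\pi(i)}\neq 0$ for every $i\in[n]$; once this is done for every such pair, \cref{lem:CD} immediately supplies the required positive diagonal matrix $D$.

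To build $\pi$, I would pass to the bipartite \emph{support graph} $H$ whose two sides are disjoint copies $R$ and $C$ of $[n]$ (thought of as the rows and columns of $M$), with $i\in R$ joined to $j\in C$ exactly when $M_{ij}\neq 0$. A permutation $\pi$ of the desired form is precisely a perfect matching of $H$ that uses the edge between $a\in R$ and $b\in C$; since $M_{ab}\neq 0$, this edge is present, so it is enough to find a perfect matching of the bipartite graph $H'=H-a-b$ obtained by deleting $a$ from $R$ and $b$ from $C$. Such a matching pairs $R\setminus\{a\}$ with $C\setminus\{b\}$, and adjoining the edge $ab$ completes the permutation.

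The existence of this matching should follow from Hall's theorem together with the hypothesis. Suppose no perfect matching of $H'$ exists. Since $\ab{R\setminus\{a\}}=\ab{C\setminus\{b\}}=n-1$, Hall's condition must fail, so there is a nonempty $S\subseteq R\setminus\{a\}$ with $\ab{N_H(S)\cap(C\setminus\{b\})}\le\ab{S}-1$. Put $T=(C\setminus\{b\})\setminus N_H(S)$. Then $M[S,T]=0$ by construction, and $\ab{T}\ge(n-1)-(\ab{S}-1)=n-\ab{S}\ge 1$ (using $\ab{S}\le n-1$ since $S\subseteq[n]\setminus\{a\}$), so $T$ is nonempty. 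But now $S$ and $T$ are nonempty with $M[S,T]=0$ and $\ab{S}+\ab{T}\ge n$, contradicting the assumption that $\ab{S}+\ab{T}<n$ for every such pair. Hence the matching exists, condition (2) of \cref{lem:CD} is verified, and the lemma gives the desired $D$.

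Since the whole argument is essentially a single application of Hall's theorem, I do not expect any serious obstacle; the only point that needs a little care is checking that the sets $S$ and $T$ produced by the failure of Hall's condition are genuinely nonempty, so that the hypothesis of the lemma applies to them — and this is immediate from $S\subseteq[n]\setminus\{a\}$, which forces $\ab{S}\le n-1$.
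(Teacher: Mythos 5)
Your proof is correct and follows essentially the same route as the paper: it reduces to condition (2) of \cref{lem:CD}, encodes the support of $M$ as a bipartite graph, deletes the row $a$ and column $b$, and applies Hall's theorem to either find the matching or produce the forbidden pair $(S,T)$ with $\ab S+\ab T\geq n$. The only (cosmetic) difference is that you verify condition (2) directly for every pair $(a,b)$, whereas the paper argues by contraposition starting from a hypothetical bad pair; your explicit check that $T$ is nonempty (via $\ab S\le n-1$) is a slightly more careful rendering of the same step.
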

\begin{proof}
Let $G$ be the bipartite graph with parts $A,B$, where $A=B=[n]$, and where $(x,y) \in E(G)$ if and only if $M_{xy} \neq 0$.
    Suppose for contradiction that there exists no such $D$. By \cref{lem:CD}, there exists an $(a,b) \in [n]^2$ such that $M_{ab} \neq 0$, but there is no permutation $\pi$ such that $\pi(a)=b$ and $M_{i\pi(i)} \neq 0$ for all $i$. Equivalently, the edge $(a,b)$ participates in no perfect matching of $G$.

    Now, let $H$ be the subgraph of $G$ induced on the vertex set $(A \setminus \{a\}) \cup (B \setminus \{b\})$. Then $H$ has no perfect matching. By Hall's theorem (see e.g.\ \cite[Theorem~15.15.3]{GR01}), this implies that there exists some non-empty $S \subseteq A$ with $\ab{N_H(S)}< \ab{S}$. Letting $T = (B \setminus \{b\}) \setminus N_H(S)$, we find that there is no edge between $S$ and $T$, and
    \[
    \ab S + \ab T = \ab S + ((\ab B -1) - \ab{N_H(S)}) > \ab S + ((n-1) - \ab S) = n-1.
    \]
    Hence, we have found non-empty subsets $S,T \subseteq [n]$ with $\ab S + \ab T \geq n$. Moreover, as there is no edge between $S$ and $T$ in $H$, there is also no edge between them in $G$, and hence $M_{st}=0$ for all $s \in S,t \in T$, a contradiction.
\end{proof}

\section{Proof of Theorem \ref{thm:main}}\label{sec:proof main}
In this section, we prove \cref{thm:main}. We begin by proving the following lemma, which implies \cref{thm:main} in case all rows of the weighted adjacency matrix have the same $L^2$ norm. Later, we will see how this special case actually implies the full theorem.
\begin{lemma}\label{lem:normalized case}
    Let $G$ be an $n$-vertex graph, and let $B$ be a Hermitian weighted adjacency matrix of $G$. Suppose that every row of $B$ has $L^2$ norm $1$, i.e.\ that $\sum_{j=1}^n \ab{B_{ij}}^2=1$ for all $i \in [n]$.
    \begin{lemenum}
        \item\label{lemit:C4-free} If $G$ is $C_4$-free, then $n_{\geq 0}(B) \geq \beta n$, where $\beta=\sqrt 3-\frac 32$ as in \cref{thm:main}.
        \item\label{lemit:girth-5} If $G$ has girth at least $5$, then $n_{\geq 0}(B) \geq n/4$.
    \end{lemenum}
\end{lemma}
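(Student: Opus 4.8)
The plan is to study the \emph{empirical spectral distribution} of $B$. Write $\lambda_1 \ge \dots \ge \lambda_n$ for the eigenvalues of $B$ (which are real, as $B$ is Hermitian), and let $X$ be the random variable equal to $\lambda_i$ with probability $1/n$ for each $i \in [n]$. Then $n_{\ge 0}(B) = n \cdot \pr(X \ge 0) \ge n\cdot \pr(X > 0)$, so it suffices to check that $X$ satisfies the hypotheses of \cref{lem:anticoncentration} — namely $\E[X]=0$, $\E[X^2]=1$, and $\E[X^4] \le 2$, together with the extra condition $\E[X^3]=0$ for part~(ii) — and then quote that lemma. The key observation is that the moments of $X$ are normalized traces of powers of $B$: $\E[X^k] = \frac1n \tr(B^k) = \frac1n \sum_{v_1,\dots,v_k \in [n]} B_{v_1 v_2} B_{v_2 v_3}\cdots B_{v_k v_1}$, a sum over closed walks of length $k$ in $G$, where the summand vanishes unless each consecutive pair $v_i v_{i+1}$ is an edge of $G$ (so in particular $B$ has zero diagonal and each $v_i \ne v_{i+1}$).

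The first few moments are straightforward. We have $\E[X] = \frac1n\tr(B) = 0$ since $B$ has zero diagonal. Using $B_{vu} = \overline{B_{uv}}$, we get $\E[X^2] = \frac1n \sum_{u,v} B_{uv}B_{vu} = \frac1n \sum_{u,v} \ab{B_{uv}}^2 = \frac1n \sum_u 1 = 1$ by the row-norm hypothesis. For part~(ii), any closed walk of length $3$ with a nonzero contribution would trace out a triangle, which is forbidden when $G$ has girth at least $5$, so $\E[X^3] = \frac1n\tr(B^3) = 0$.

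The heart of the argument — and the only place $C_4$-freeness is used — is the fourth-moment bound $\E[X^4] \le 2$, equivalently $\tr(B^4) \le 2n$. In a closed walk $v_1 \to v_2 \to v_3 \to v_4 \to v_1$ with a nonzero contribution, consecutive vertices are distinct; if in addition $v_1 \ne v_3$ and $v_2 \ne v_4$, then $v_1,v_2,v_3,v_4$ are four distinct vertices spanning a $4$-cycle, which is forbidden. Hence only walks with $v_1 = v_3$ or $v_2 = v_4$ contribute, and by inclusion–exclusion, together with the Hermitian and norm hypotheses,
\[
\tr(B^4) = \sum_{v_1,v_2,v_4}\ab{B_{v_1 v_2}}^2\ab{B_{v_1 v_4}}^2 + \sum_{v_1,v_2,v_3}\ab{B_{v_1 v_2}}^2\ab{B_{v_2 v_3}}^2 - \sum_{v_1,v_2}\ab{B_{v_1 v_2}}^4 = 2n - \sum_{v_1,v_2}\ab{B_{v_1 v_2}}^4 \le 2n,
\]
where the first two sums each evaluate to $n$ after factoring the inner summations and applying the row- and column-norm hypotheses (the column norms equal the row norms since $B$ is Hermitian). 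This is the step I expect to be the main obstacle: it is conceptually simple (count closed walks of length $4$; the non-degenerate ones are precisely copies of $C_4$), but it requires care with the inclusion–exclusion bookkeeping and with the complex conjugates coming from Hermiticity.

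With $\E[X]=0$, $\E[X^2]=1$, and $\E[X^4]\le 2$ in hand, \cref{lemit:anticoncentration-1} gives $\pr(X>0)\ge \beta$, so $n_{\ge 0}(B) \ge \beta n$, proving \cref{lemit:C4-free}. When $G$ has girth at least $5$, the additional fact $\E[X^3]=0$ lets us instead apply \cref{lemit:anticoncentration-2}, giving $\pr(X>0)\ge\tfrac14$ and hence $n_{\ge 0}(B)\ge n/4$, which proves \cref{lemit:girth-5}.
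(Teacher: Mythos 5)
Your proof is correct and follows essentially the same route as the paper's: pass to the empirical spectral distribution $X$ of $B$, compute $\E[X]=0$, $\E[X^2]=1$, $\E[X^4]\le 2$ (and $\E[X^3]=0$ for part~(ii)) via trace--closed-walk identities, using $C_4$-freeness to restrict the length-$4$ closed walks that can contribute, and then invoke \cref{lem:anticoncentration}. The only cosmetic difference is in how you organize the fourth-moment computation: you do inclusion--exclusion on the conditions $v_1=v_3$ and $v_2=v_4$, whereas the paper classifies the degenerate closed walks by unlabeled shape (single edge vs.\ cherry) and counts with multiplicity; both yield the same identity $\tr(B^4)=2n-\sum_{i,j}\ab{B_{ij}}^4$.
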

\begin{proof}
    We begin with \cref{lemit:C4-free}, so let $G$ be a $C_4$-free graph. Let $\lambda_1 \geq \dotsb \geq \lambda_n$ be the eigenvalues of $B$. Let $X$ be the random variable that takes value $\lambda_i$ with probability $1/n$, for all $i \in [n]$. We have the identity 
    \begin{equation}\label{eq:identity}
        n_{\ge 0}(B) = n \cdot \pr(X \geq 0) \geq n \cdot \pr(X>0).
    \end{equation}
    Since $B$ has zero diagonal, we know that $\E[X] = \tr(B)/n = 0$. Similarly, by the assumption that the rows of $B$ have $L^2$ norm $1$, we can compute
    \[
    \E[X^2] = \frac 1n \tr(B^2) =\frac 1n\sum_{i=1}^n \sum_{j=1}^n B_{ij} B_{ji}= \frac 1n \sum_{i=1}^n\sum_{j=1}^n \ab{B_{ij}}^2 = 1.
    \]
    We would like to apply \cref{lem:anticoncentration}, so we need to estimate $\E[X^4]$, which is equal to $\frac 1n \tr(B^4)$. Note that \[\tr(B^4)=\sum_{i,j,k,\ell=1}^nB_{ij}B_{jk}B_{k\ell}B_{\ell i}.\]Since $B$ is a weighted adjacency matrix of $G$, a tuple of vertices $(i,j,k,\ell)$ can provide a nonzero contribution only if it represents a closed walk in $G$.

    We now use the fact that $G$ is $C_4$-free to find that the only closed walks of length $4$ which provide a nonzero contribution to $\tr(B^4)$ are \emph{degenerate} (i.e.\ they repeat vertices). In $G$, such walks come in two combinatorial types: those that traverse a single edge four times, and those that traverse a \emph{cherry} (i.e.\ a 2-edge path) twice. Each single-edge walk is counted twice, once for each of its starting vertices. Each cherry walk is counted four times: once starting at each of its outer vertices, and twice starting at its central vertex, since from there the cherry can be traversed in two different orientations.

    We can sum over (unlabeled) cherries by first summing over choices for the central vertex, then summing over ordered pairs of candidates for the outer vertices, then subtracting the contribution from non-distinct candidates for the outer vertices (and then dividing by 2 to account for the fact that the outer vertices should not be ordered). This yields
    \begin{align*}
        \tr(B^4)&=\sum_{i=1}^n \sum_{j=1}^n \ab{B_{ij}}^4+4\cdot \frac12\left(\sum_{i=1}^n \left(\sum_{j=1}^n \ab{B_{ij}}^2\right)^2 - \sum_{i=1}^n \sum_{j=1}^n \ab{B_{ij}}^4\right)\\
        &= 2\sum_{i=1}^n \left(\sum_{j=1}^n \ab{B_{ij}}^2\right)^2 - \sum_{i=1}^n \sum_{j=1}^n \ab{B_{ij}}^4\\
        &= 2n - \sum_{i=1}^n \sum_{j=1}^n \ab{B_{ij}}^4 \leq 2n.
    \end{align*}
    Therefore,
    \[
    \E[X^4] = \frac 1n \tr(B^4) \leq 2.
    \]
    We may therefore apply \cref{lemit:anticoncentration-1} to find that $\pr(X>0)\geq \beta$.
    Plugging this into \eqref{eq:identity} completes the proof of \cref{lemit:C4-free}. Then, \cref{lemit:girth-5} can be proved in virtually the same way, using \cref{lemit:anticoncentration-2} instead of \cref{lemit:anticoncentration-1}. Indeed, note that if $G$ has girth at least 5 then it has no closed walks of length 3, so $\E[X^3]=0$.
\end{proof}
We are now ready to prove \cref{thm:main}.
\begin{proof}[Proof of \cref{thm:main}]
We begin with \cref{thmit:C4-free}.
    Our proof is by induction on $n$, with the base case $n=1$ being trivial as then $A$ must be the zero matrix. We now assume that $n>1$, and that the result has been proved for all smaller values of $n$.

    We split into two cases according to \cref{lem:scaling}. Suppose first that there exist non-empty sets $S,T \subseteq [n]$ with $\ab S + \ab T \geq n$ and $A[S,T]=0$. As $A$ is Hermitian, this implies that $A[T,S]=0$ as well. Let us permute the rows of $A$ so that the rows in $S \setminus T$ come first, then those in $S \cap T$, then those in $T \setminus S$, and finally those in $[n] \setminus (S \cup T)$ (and apply the same permutation to the columns, so that $A$ remains Hermitian). Then we can write $A$ in block form as
    \[
    \begin{blockarray}{ccccc}
        &S \setminus T & S \cap T & T \setminus S&\\
        \begin{block}{c(cccc)}
            S \setminus T&A_1 &0&0&*\\
            S \cap T & 0 & 0 & 0 & *\\
            T \setminus S & 0 &0 & A_2 &*\\
            &* &*&*&*\\
        \end{block}
    \end{blockarray}
    ~,
    \]
    where $A_1,A_2$ are principal submatrices of $A$ of sizes $\ab{S \setminus T},\ab{T \setminus S}$, respectively, and where $*$ denotes an arbitrary submatrix. By \cref{lem:monotone}, we know that $n_{\ge 0}(A) \geq n_{\ge 0}(A')$, where $A'$ is the principal submatrix on rows and columns in $S \cup T$. Moreover, as $A'$ is block-diagonal, its eigenvalues are those of $A_1$, those of $A_2$, plus $\ab{S \cap T}$ additional zero eigenvalues. Applying the inductive hypothesis to $A_1$ and $A_2$, which are Hermitian weighted adjacency matrices of smaller $C_4$-free graphs, we find that
    \begin{align*}
        n_{\ge 0}(A) &\geq n_{\ge 0}(A') \\
        &= n_{\ge 0}(A_1) + n_{\ge 0}(A_2) + \ab{S \cap T}\\
        &\geq \beta{\ab{S \setminus T}} + \beta{\ab{T \setminus S}} + \ab{S \cap T}\\
        &\geq \beta \left({\ab{S \setminus T} + \ab{T \setminus S} + 2\ab{S \cap T}}\right) &&[\text{as }\beta \leq \tfrac 12]\\
        &= \beta(\ab S + \ab T)\\
        &\geq \beta n.
    \end{align*}
    This concludes the proof in this case. Therefore, we may assume that such sets $S,T$ do not exist. Let $M$ be the matrix defined by $M_{ij} = \ab{A_{ij}}^2$, and note that $M$ has non-negative entries and the same set of zero entries as $A$. In particular, if $M[S,T]=0$ for some $S,T \subseteq [n]$, then $\ab S + \ab T<n$. By \cref{lem:scaling}, we conclude that there exists a diagonal matrix $D$ with strictly positive diagonal entries such that $DMD$ is doubly stochastic. Let the diagonal entries of $D$ be $d_1,\dots,d_n$.

    Now, let $Z$ be the diagonal matrix whose diagonal entries are $\sqrt{d_1},\dots,\sqrt{d_n}$, and let $B=Z A Z=ZAZ^*$. By \cref{lem:sylvester}, we have that $n_{\ge 0}(B) = n_{\ge 0}(A)$. Moreover, the $(i,j)$ entry of $B$ is $\sqrt{d_i d_j} A_{ij}$, and hence for any $i \in [n]$,
    \begin{equation*}
    \sum_{j=1}^n \ab{B_{ij}}^2 = \sum_{j=1}^n d_i d_j \ab{A_{ij}}^2 = \sum_{j=1}^n d_i d_j M_{ij} = \sum_{j=1}^n (DMD)_{ij} = 1,
    \end{equation*}
    since the matrix $DMD$ is doubly stochastic. We may now apply \cref{lemit:C4-free} to $B$ to conclude that $n_{\geq 0}(A)=n_{\geq 0}(B) \geq \beta n$, which completes the proof. The proof of \cref{thmit:girth-5} is again virtually identical; one simply replaces the application of \cref{lemit:C4-free} above by \cref{lemit:girth-5}.
\end{proof}

\section{Concluding remarks}\label{sec:conclusion}
To end this paper, we would like to highlight two further questions about the inertia bound that our techniques seem incapable of resolving.
The first is a question of Ihringer \cite{Ihr19}, who asked whether the inertia bound is tight for almost all graphs. This is equivalent to asking about the tightness of the inertia bound for the binomial random graph $\mathbb G(n,\frac 12)$. We make the following conjecture.
\begin{conjecture}\label{conj:random}
    The following holds with probability $1-o(1)$ for $G \sim \mathbb G(n, \frac 12)$ as $n \to \infty$. Every weighted adjacency matrix $A$ of $G$ has $n_{\geq 0}(A) =\Omega(\frac{n}{\log n})$.
\end{conjecture}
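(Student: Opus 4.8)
\medskip
\noindent The natural approach to \cref{conj:random} is to mimic the proof of \cref{thm:main}. Given a Hermitian weighted adjacency matrix $A$ of $G\sim\mathbb G(n,\tfrac12)$, one would first apply the matrix-scaling reduction (\cref{lem:scaling,lem:sylvester}) to reduce to the case of a Hermitian weighted adjacency matrix $B$ all of whose rows have $L^2$-norm $1$, handling the case of large zero blocks by induction exactly as before. Writing $X$ for the empirical spectral distribution of $B$, we again have $\E[X]=0$ and $\E[X^2]=1$, and by an anticoncentration inequality in the spirit of \cref{lem:anticoncentration} (using the general form \cref{lem:HZZ} with an optimized parameter) it would suffice to prove $\E[X^4]=O(\log n)$, i.e.\ $\tr(B^4)=O(n\log n)$; this would give $\pr(X>0)=\Omega(1/\log n)$ and hence $n_{\ge0}(A)=n_{\ge0}(B)=\Omega(n/\log n)$. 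The target $O(n\log n)$ is the correct order of magnitude, and in fact $\min_A n_{\ge 0}(A)=\Theta(n/\log n)$ for a typical $G$: for \emph{any} graph one has $\min_A n_{\ge 0}(A)\le\chi(\overline G)$, as witnessed by the block-diagonal matrix built from a minimum clique cover of $\overline G$ with each clique block of size $c$ realised as $J_c-I_c$ (whose nonzero eigenvalues are $c-1$ once and $-1$ with multiplicity $c-1$), and since $\overline{\mathbb G(n,\tfrac12)}$ has the same distribution as $\mathbb G(n,\tfrac12)$, a theorem of Bollob\'as gives $\chi(\overline G)=(1+o(1))\tfrac{n}{2\log_2 n}$ with high probability. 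Moreover, after scaling, this extremal matrix has $\tr(B^4)=\Theta(n\log n)$, so the hoped-for fourth-moment bound would be \emph{tight}.

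The difficulty --- and the reason \cref{conj:random} remains open --- lies in the fourth-moment estimate. In contrast to the $C_4$-free setting, $\mathbb G(n,\tfrac12)$ has $\Theta(n^4)$ copies of $C_4$, so the combinatorial expansion of $\tr(B^4)$ used in \cref{lem:normalized case} only gives
\[
\tr(B^4)\le 2n+8\sum_{abcd}\ab{B_{ab}B_{bc}B_{cd}B_{da}},
\]
where the sum is over $4$-cycles $abcd$ of $G$, and the $C_4$ term can be as large as $\Theta(n^2)$ --- for instance when $B$ is a scaling of the ordinary adjacency matrix $A_G$, whose Perron-like eigenvalue has order $\sqrt n$. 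That particular $B$ has $n_{\ge0}(B)=\Theta(n)$, so it is certainly not a counterexample to the conjecture, but it shows that any proof must genuinely exploit the hypothesis that $n_{\ge0}(B)$ is small. Writing $B=\sum_i\lambda_iv_iv_i^*$, when $n_{\ge0}(B)$ is small the fourth moment $\sum_i\lambda_i^4$ must be concentrated either on the few nonnegative eigenvalues or on the negative ones; in the first case $\sum_{\lambda_i\ge0}\lambda_i^4\le\lambda_{\max}(B)^2\tr(B^2)=\lambda_{\max}(B)^2 n$, so one would only need to control the spectral norm $\lambda_{\max}(B)$. The genuinely problematic case is the second: a bounded number of large \emph{negative} eigenvalues of $B$ carrying the entire fourth-moment mass, which reveals nothing at all about $n_{\ge0}(B)$.

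One is thus caught between two incompatible requirements. The fourth-moment estimate needs $B$ to have small spectral norm (or at least no large negative eigenvalues), but the only way we know to produce such control --- projecting away the offending eigenvectors, using that $n_{\ge0}(A)\ge n_{\ge0}(A-P)-\operatorname{rank}(P)$ for any $P$ --- destroys the property that the matrix is supported on the edges of $G$, and with it the combinatorial identity for $\tr(B^4)$, which is the only tool available that beats the trivial spectral bound. A proof of \cref{conj:random} would presumably have to combine the pseudorandomness of $\mathbb G(n,\tfrac12)$ --- for instance the fact that if $\mathbb G(n,\tfrac12)$ contains a spanning complete multipartite subgraph $K_{n_1,\dots,n_t}$ then $\max_i n_i=\Omega(n/\log n)$, since $t\le\omega(\mathbb G(n,\tfrac12))=O(\log n)$, whereas complete multipartite graphs with small parts are a natural family of graphs whose inertia bound is $o(n/\log n)$ --- with some new device that keeps the matrix inside, or uniformly close to, the class of weighted adjacency matrices throughout the argument. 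We have not found such an argument, and leave \cref{conj:random} open.
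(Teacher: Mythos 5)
This statement is an explicitly labeled \emph{conjecture}; the paper offers no proof of it, only two brief remarks in \cref{sec:conclusion}: Alon's observation that $\min_A n_{\ge 0}(A)\le\chi(\overline G)$ (via block-diagonal clique-cover matrices), and the consequence via Bollob\'as that $\chi(\overline G)=O(n/\log n)$ for $G\sim\mathbb G(n,\frac12)$, so the conjectured bound would be best possible. You correctly recognize that there is nothing to prove here: your ``proposal'' is an honest obstruction analysis that concludes by leaving the conjecture open, which is the right call.

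For what it's worth, the analysis you give is sound and goes substantially beyond the paper's own remarks. Your computation that the scaled clique-cover matrix (blocks $(J_c-I_c)/\sqrt{c-1}$ with $c\approx 2\log_2 n$) has $\tr(B^4)=\Theta(n\log n)$ is correct and correctly identifies the target fourth-moment bound as tight. The estimate $\tr(B^4)\le 2n+8\sum_{C_4}\ab{B_{ab}B_{bc}B_{cd}B_{da}}$ is the right generalization of the paper's \cref{lem:normalized case} computation, and your observation that for the scaled adjacency matrix of $\mathbb G(n,\frac12)$ the $C_4$-term is $\Theta(n^2)$ (consistent with its Perron eigenvalue of order $\sqrt n$) correctly pinpoints why the moment method cannot work unconditionally. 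The dichotomy you draw --- fourth-moment mass on few nonnegative eigenvalues is harmless via $\sum_{\lambda_i\ge 0}\lambda_i^4\le\lambda_{\max}^2\tr(B^2)$, whereas mass on a few large negative eigenvalues is invisible to $n_{\ge 0}$ --- and your point that low-rank projection trades control of the spectrum for loss of the edge-support structure, are a fair summary of why the $C_4$-free argument does not transfer. The closing remark about complete multipartite subgraphs is also a reasonable indication of where a proof would need to inject pseudorandomness of $\mathbb G(n,\frac12)$, though of course this is speculative.
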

In particular, as $\alpha(\mathbb G(n,\frac 12)) = O(\log n)$ with probability $1-o(1)$, \cref{conj:random} would imply that the inertia bound is far from tight for almost all graphs. An earlier version of this paper conjectured a stronger bound, namely that $n_{\geq 0}(A) \geq (\frac 12 - o(1))n$ for every weighted adjacency matrix of $G \sim \mathbb G(n, \frac 12)$. However, Noga Alon pointed out to us that such a statement is false, and that \cref{conj:random} would be best possible if true.  Alon observed that for every graph $G$, there exists a weighted adjacency matrix $A$ with $n_{\geq 0}(A)=\chi(\overline G)$, where $\chi(\overline G)$ is the clique cover number of $G$. Indeed, one can easily check that the adjacency matrix of a disjoint union of $m$ cliques has exactly $m$ non-negative eigenvalues, which implies the claim by taking $A$ to be the adjacency matrix of a subgraph of $G$ which forms an optimal clique cover. A famous result of Bollob\'as \cite{MR0951992} implies that $\chi(\overline G) = O(\frac{n}{\log n})$ with probability $1-o(1)$ for $G \sim \mathbb G(n, \frac 12)$, showing that \cref{conj:random} would be best possible if true.

\cref{thm:main} (together with results on the $C_4$-free process \cite{BoKe}) implies that there exist graphs $G$ for which $n_{\geq 0}(A)$ is larger than $\alpha(G)^{\frac 32-o(1)}$ for any weighted adjacency matrix $A$ of $G$. If \cref{conj:random} is true, it would imply that there exist graphs with $n_{\geq 0}(A) > 2^{\alpha(G)/3}$ for any weighted adjacency matrix $A$. However, neither of these rule out the possibility that $n_{\geq 0}(A)$ is upper-bounded by \emph{some} function of $\alpha(G)$. We conjecture that no such upper bound exists.
\begin{conjecture}\label{conj:alpha 2}
    For every integer $k$, there exists a graph $G$ with $\alpha(G)=2$ and $n_{\geq 0}(A) \geq k$ for every weighted adjacency matrix $A$ of $G$.
\end{conjecture}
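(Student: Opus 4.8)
The plan is to seek $G$ among complements of triangle-free graphs. Note that $\alpha(G)=2$ is exactly the condition that $H := \overline G$ is triangle-free and has at least one edge, in which case a weighted adjacency matrix $A$ of $G$ is precisely a Hermitian matrix vanishing on the diagonal and on $E(H)$. Decomposing such an $A$ into its positive and negative parts, $A = P - N$ with $P, N \succeq 0$ having orthogonal column spaces, one has $n_{\ge 0}(A) = n - \operatorname{rank}(N)$ and $\operatorname{rank}(P) \le n_{\ge 0}(A)$, while $P$ and $N$ necessarily agree on the diagonal and on $E(H)$ (since $A$ vanishes there). Thus \cref{conj:alpha 2} is equivalent to the following rigidity statement: for each $k$, there is a triangle-free graph $H$ such that no PSD matrix of rank less than $k$ can agree, on the diagonal together with the (sparse) edge set $E(H)$, with a PSD matrix of rank exceeding $\lvert V(H)\rvert - k$. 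The relevant upper bound here is the one due to Alon quoted above: the adjacency matrix of an optimal proper colouring of $H$ (equivalently, an optimal clique cover of $G$) is a weighted adjacency matrix of $G$ with $n_{\ge 0} = \chi(H)$, so one \emph{necessarily} needs $\chi(H)\to\infty$; natural candidates are the Kneser graphs $H = K(3d-1,d)$ (triangle-free with $\omega(H)=2$ and $\chi(H)=d+1$) or the triangle-free graphs of large chromatic number produced by the triangle-free process.

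The moment method behind \cref{thm:main} is useless here, since $\alpha(G)=2$ forces $G$ to have $\Omega(n^2)$ edges and hence $\Omega(n^4)$ closed walks of length four, so the fourth moment of the empirical spectral distribution cannot be controlled. The only general lower bound on $n_{\ge 0}(A)$ I can see that avoids $C_4$-freeness is the elementary inequality $n_{>0}(A) \ge \lVert A\rVert_{S_1}/(2\lVert A\rVert_{\mathrm{op}})$, valid for any trace-zero Hermitian $A$ because $\frac12\lVert A\rVert_{S_1} = \sum_{\lambda_i>0}\lambda_i \le n_{>0}(A)\,\lVert A\rVert_{\mathrm{op}}$. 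It would therefore suffice to show that no weighted adjacency matrix $A$ of the chosen $G$ is close to having bounded rank, in the quantitative sense that $\lVert A\rVert_{S_1}\ge 2k\,\lVert A\rVert_{\mathrm{op}}$ always. Unfortunately the natural adversary takes $A$ close to a difference of two rank-one PSD matrices, and ruling this out appears to require precisely the rigidity from the reformulation above, so this tool by itself recovers little more than the trivial bound $n_{\ge 0}(A)\ge\alpha(G)$.

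The main obstacle is that we have no technique for lower-bounding $\min_A n_{\ge 0}(A)$ over \emph{all} Hermitian weighted adjacency matrices beyond $\alpha(G)$, even when $G$ is highly structured. The natural hope would be to exploit vertex-transitivity by symmetrizing an optimal weighting into the Bose--Mesner algebra of the surrounding association scheme (for $H=K(3d-1,d)$, the Johnson scheme $J(3d-1,d)$), in which every eigenspace other than the trivial one has dimension at least $3d-2$, so that the inertia bound forces any matrix in the algebra to have $n_{\ge 0}\ge 3d-2$, and an explicit Eberlein-polynomial computation would give $n_{\ge 0}\to\infty$. But this must fail: Alon's colouring bound already shows the true optimum is at most $\chi(H)=d+1 < 3d-2$, so for these graphs the optimal inertia-bound weighting is necessarily very far from respecting the symmetry of $G$ — the same phenomenon that makes Sinkovic's analysis of the Paley graph on $17$ vertices so delicate, and the reason such symmetrization arguments (which work for the ratio bound, the Lov\'asz theta function, and the Delsarte bound) break down for $n_{\ge 0}$, which is neither convex nor concave. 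Resolving the conjecture thus seems to require a genuinely new ingredient: either a convex — hence symmetrizable — relaxation of the inertia bound that is nevertheless unbounded on these graphs, or a direct linear-algebraic rigidity argument establishing that a bounded-rank PSD matrix cannot agree with an almost-full-rank PSD matrix on the diagonal together with the edge set of a triangle-free graph of sufficiently large chromatic number.
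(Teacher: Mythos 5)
\cref{conj:alpha 2} is a conjecture, stated in the paper without proof, so there is no ``paper's own proof'' for your proposal to be compared against. You correctly recognized this and, rather than fabricating an argument, surveyed obstructions --- which is the appropriate response.

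The substance of your discussion is accurate and consistent with the paper. You are right that the moment method behind \cref{thm:main} cannot apply: a graph with $\alpha(G)=2$ on $n$ vertices has $\Omega(n^2)$ edges (since $\overline G$ is triangle-free), hence $\Theta(n^4)$ closed $4$-walks, and \cref{lem:anticoncentration} then gives nothing. Your invocation of Alon's clique-cover observation from \cref{sec:conclusion} correctly shows that any witness graph must have a triangle-free complement with unbounded chromatic number, making Kneser graphs $K(3d-1,d)$ (or outputs of the triangle-free process) the natural candidates; your eigenspace-dimension computation for the Johnson scheme $J(3d-1,d)$ is correct; and the resulting tension --- symmetrization into the Bose--Mesner algebra would give $n_{\ge 0}\ge 3d-2$, while Alon's clique-cover bound shows the true minimum is at most $\chi=d+1$ --- is a clean demonstration that the optimal inertia-bound weighting must break the graph's symmetry, which is the fundamental difficulty you identify. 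One small imprecision: your PSD-rigidity reformulation is only an implication, not the claimed equivalence. The useful direction, rigidity implies the conjecture, is correct via the orthogonal spectral decomposition $A=P-N$ with $\operatorname{rank}(P)=n_{>0}(A)$ and $\operatorname{rank}(N)=n_{<0}(A)$. But the converse can fail: a non-orthogonal pair of PSD matrices of ranks $<k$ and $>n-k$ agreeing on the diagonal and on $E(\overline G)$ only forces $n_{>0}(P-N)<k$, leaving room for zero eigenvalues and hence for $n_{\ge 0}(P-N)\ge k$. This does not affect the line of attack you propose. In short, there is no gap in what you actually claim --- you honestly assess the conjecture as open --- and your diagnosis of the difficulties is sound.
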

We remark that a result of Konyagin \cite{Kon81} implies the analogous statement for the ratio bound, i.e.\ that for every $k\in \mathbb N$ there exists a graph $G$ with $\alpha(G)=2$ for which the ratio bound (with any weighted adjacency matrix) cannot be used to prove $\alpha(G)<k$. Indeed, Konaygin showed that there are graphs $G$ with $\alpha(G)=2$ which have arbitrarily large values of the so-called \emph{Lov\'asz theta number} \cite{Lov79}. For more details, see \cite{Alon19} or \cite[Chapter 11]{Lov19}.

\paragraph{Acknowledgements:} We are grateful to Noga Alon, Anurag Bishnoi, Clive Elphick, and Ferdinand Ihringer for helpful comments and interesting discussions on earlier drafts of this paper.

\providecommand{\bysame}{\leavevmode\hbox to3em{\hrulefill}\thinspace}
\providecommand{\MR}{\relax\ifhmode\unskip\space\fi MR }
\providecommand{\MRhref}[2]{%
  \href{http://www.ams.org/mathscinet-getitem?mr=#1}{#2}
}
\providecommand{\href}[2]{#2}

\end{document}